\documentclass[a4paper,11pt]{article}
\usepackage[utf8]{inputenc}
\usepackage[margin=2cm]{geometry}

\usepackage{xcolor}
\definecolor{myblue}{rgb}{0.1 0.1 0.6}
\usepackage{hyperref}
\hypersetup{
   colorlinks=true,
   linkcolor=myblue,
   citecolor=myblue,
   urlcolor=myblue
}

\usepackage[T1]{fontenc}
\usepackage{amsmath,amssymb,amsthm}
\usepackage{newpxtext}
\usepackage{eulerpx}

\usepackage{url}
\usepackage[shortlabels]{enumitem}

\theoremstyle{plain}
\newtheorem{proposition}{Proposition}[section]
\newtheorem{theorem}[proposition]{Theorem}

\newtheorem{lemma}[proposition]{Lemma}

\theoremstyle{remark}
\newtheorem{remarkx}[proposition]{Remark}

\newenvironment{remark}
  {\pushQED{\qed}\remarkx}
  {\popQED\endremarkx}

\newcommand{\mc}[1]{\mathcal{#1}}

\newcommand{\ip}[2]{{\langle {#1} , {#2} \rangle}}
\newcommand{\lin}{\operatorname{lin}}

\newcommand{\proten}{\widehat\otimes}
\newcommand{\Tr}{{\operatorname{Tr}}}
\newcommand{\catHilb}{\textsf{Hilb}}
\newenvironment{smat}{\bigl(\begin{smallmatrix}}{\end{smallmatrix}\bigr)}
\newcommand{\smattwo}[4]{\begin{smat}{#1}&{#2}\\{#3}&{#4}\end{smat}}

\begin{document}

\title{$W^*$-categories are von Neumann}
\author{Matthew Daws}
\maketitle

\begin{abstract}
A $C^*$-category is to a $C^*$-algebra what a groupoid is to a group.  An example is the category of Hilbert spaces and bounded linear maps, and indeed, every $C^*$-category can be represented as a sub-$*$-category of this.  Analogously, a $W^*$-category is a $C^*$-category where every object has a predual, and we might expect a representation theory on Hilbert spaces where the resulting morphism spaces are $\sigma$-weakly closed.  We identify a gap in the literature here, stemming from the (perhaps surprising) fact that a $C^*$-algebra can have a non-isometric predual and yet not be a $W^*$-algebra.  Using the theory of dual TROs (ternary rings of operators) we repair this gap, and along the way, also give further justification to arguments in the literature about dual TROs.  Motivated by the theory of Dual Banach Algebras, we offer an alternative axiomatisation of $W^*$-categories where we allow non-isometric preduals of the hom spaces, but require that they are additionally bimodules in a certain sense.  This involves the theory of self-dual Hilbert $C^*$-modules.

MSC (2020) Classification: 46L10, 17A40, 46M15, 46L08
\end{abstract}

\section{Introduction}

A $C^*$-category, see \cite{GLR_Wstar_categories,mitchener_cstar_cats} for example, is a linear category $\mc A$ where each morphism space $\mc A(A,B) = \hom(A,B)$ is a Banach space making composition of morphisms contractive, and there is an involutive anti-linear contravariant endofunctor $*$, denoted $\mc A(A,B) \ni x \mapsto x^*\in\mc A(B,A)$ which satisfies $\|x\|^2 = \|x^*x\|$ and such that $x^*x \in \mc A(A,A)$ is positive (there is $a\in\mc A(A,A)$ with $x^*x = a^*a$).  Notice that $\mc A(A,A)$ is hence a $C^*$-algebra.  An example is given by taking a collection of Hilbert spaces, say $A\in\mc A$ associated to Hilbert space $H_A$, and letting $\hom(A,B) = \mc B(H_A,H_B)$ the set of all bounded linear maps from $H_A$ to $H_B$.  When we consider the class of all Hilbert spaces, this gives the $C^*$-category $\catHilb$.  One can also consider suitable subcategories, and by \cite[Proposition~1.14]{GLR_Wstar_categories} or \cite[Section~6]{mitchener_cstar_cats}, every $C^*$-category is equivalent to a subcategory of $\catHilb$.

Recall that a $W^*$-algebra is a $C^*$-algebra $A$ which is \emph{isometrically} isomorphic to a dual Banach space, and a von Neumann algebra is a $C^*$-algebra $A \subseteq\mc B(H)$, for some Hilbert space $H$, which is weak$^*$-closed, when $\mc B(H)$ is realised as the dual space of the trace-class operators on $H$.  By von Neumann's bicommutant theorem, $A \subseteq\mc B(H)$ is weak$^*$-closed and unital if and only if $A = A''$.  By definition, a von Neumann algebra is $W^*$, while Sakai's theorem shows the converse: a $W^*$-algebra $A$ is weak$^*$-homeomorphic to a von Neumann algebra, for some choice of Hilbert space $H$.  It is hence common to confuse these two ideas, as they are essentially the same, but we shall find it useful to be a little old-fashioned and to stick to the definitions just given.  See \cite[Section~III.3]{TakesakiI} or \cite{Sakai_Book}, for example.

A $W^*$-category, \cite{GLR_Wstar_categories}, is defined to be a $C^*$-category where every morphism space is (isometrically) isomorphic to a dual Banach space.  For example, $\catHilb$ is $W^*$, as is any subcategory where $\hom(A,B) \subseteq\mc B(H_A,H_B)$ is weak$^*$-closed for the duality with the trace class operators $H_B\to H_A$.  We believe there is a gap in \cite[Lemma~2.6]{GLR_Wstar_categories} (and so many further results from \cite{GLR_Wstar_categories} are in doubt) because there is an explicit use of ``an equivalence of norms'' in the proof.  Unfortunately (and perhaps surprisingly) the theory of $W^*$-algebras really does require \emph{isometric} isomorphism to a dual space.  For example, \cite[Example~6.9.10]{DDLS_CX_spaces_Book} gives a construction of a compact Hausdorff space $X$ such that $C(X)$ is isomorphic to the dual of a Banach space, but $C(X)$ is not a von Neumann algebra, because $X$ is not a hyper-Stonian space.  As might be expected, $X$ seems complicated and we shall not try to give a sketch of its construction.

We give a new proof of this problematic lemma.  It is perhaps surprising that this problem has persisted in the literature: we suspect this is because $W^*$-categories which one meets ``in the wild'' are usually explicitly subcategories of $\catHilb$; indeed, the lemma's main use is perhaps to show that every $W^*$-category arises in this way.  Here we see a close parallel with the relation between $W^*$-algebras and von Neumann algebras.  We make extensive use of Ternary Rings of Operators (TROs), see Section~\ref{sec:cats_to_TROs} below, and in particular the very accessible paper \cite{EOR_InjectivityNuclearityOS}.  There appears to be a further gap in a result in \cite{EOR_InjectivityNuclearityOS}, which we also give a new proof of, Proposition~\ref{prop:Vr_vnalg}.  A central idea we use is that of the multiplier algebra of a $C^*$-algebra, and the tight connection with TROs, Section~\ref{sec:mults}.  In Section~\ref{sec:main} we prove our first main result, Theorem~\ref{thm:main}, giving a new proof of \cite[Lemma~2.6]{GLR_Wstar_categories}, that a certain $C^*$-algebra, constructed from two objects in a $W^*$-category, is in fact a $W^*$-algebra.  We then make some comments about continuity of various module actions which naturally arise.

In Section~\ref{sec:self-dual} we briefly remark upon some connections between hom spaces in a $W^*$-category, and self-dual Hilbert $C^*$-modules.  In Section~\ref{sec:dbas} we consider non-isometric preduals which have the additional structure of being bimodules of the dual space, following work of Palmer \cite{palmer_arens_mult_Wstar} and Pham \cite{pham_vn_algs}.
We then consider $C^*$-categories where each hom space has such a ``bimodule predual''.  Rewritten in Hilbert module language, this is very close to \cite[Theorem~2.6]{Schweizer_HilbModsPredual}, but the proof of this result has exactly the same issue of providing a $C^*$-algebra with an isomorphic, but not isometric, predual.  We sketch the setup of \cite{Schweizer_HilbModsPredual}, and then give a new proof of the problematic part of this theorem.  This leads to a new characterisation of $W^*$-categories, Theorem~\ref{thm:wstar_from_bimods}.

In two appendices we give a couple of constructions presumably known to experts, but which we think it is worth giving the details of here, to make this paper more self-contained, and to carefully check certain details.  We mostly introduce necessary terminology and notation as needed.  For a Banach space $E$, we let $E^*$ denote the dual; when $E$ has some form of an adjoint, we denote $E^\sharp = \{ x^* : x\in E \}$, following \cite{EOR_InjectivityNuclearityOS}.  The duality between $E$ and $E^*$ is sometimes denoted $\ip{\mu}{x}$ for $\mu\in E^*, x\in E$.  Our inner-products are linear in the second variable, except where explicitly stated otherwise at points in Section~\ref{sec:dbas}.

\subsection{Acknowledgements}

The author thanks Narutaka Ozawa for useful correspondence about \cite{EOR_InjectivityNuclearityOS}.

The author wishes to thank the UK's National Health Service (NHS) and the scores of doctors and nurses who cared for him during a very serious illness which occurred in the middle of the writing this paper.

\section{From categories to TROs and modules}\label{sec:cats_to_TROs}

Given a $C^*$-category $\mc A$ we shall write $\hom(A,B)$ for the Banach space of morphisms from $A$ to $B$, given $A,B\in\mc A$.  The $*$ operation gives a bijection $\hom(A,B) \to \hom(B,A)$.  As $\hom(A,A)$ is a $C^*$-algebra, we shall tend to identify $A$ with $\hom(A,A)$ to ease notation.  As we shall mostly be interested in $W^*$-categories, we shall mostly assume that $A$ is unital (see \cite[Section~3]{mitchener_cstar_cats} for why this is not always the correct assumption, if working with $C^*$-categories).

We can view $\hom(A,B)$ as a Hilbert $C^*$-module over $A$, for the $A$-valued inner-product
\[ (x|y) = x^*y \in \hom(A,A) = A \qquad (x,y\in\hom(A,B)). \]
See \cite{Lance_HilbModsBook} for more on Hilbert $C^*$-modules, and also the initial chapters of \cite{RaeburnWilliams_MoritaEquivBook} or \cite[Chapter~8]{BlecherLeMerdy_Book}, for example.  Similarly, $\hom(A,B)$ is a \emph{left} Hilbert $C^*$-module over $B$ for the $B$-valued inner-product $(x|y) = xy^*$.  See later in Section~\ref{sec:dbas} for more on this viewpoint.

Given $A,B\in\mc A$ we can turn the space
\[ M(A,B) = \begin{pmatrix} A & \hom(B,A) \\ \hom(A,B) & B \end{pmatrix}
= \left\{ \begin{pmatrix} a & x \\ y & b \end{pmatrix} : \begin{matrix} a\in A, x\in\hom(B,A) \\
  y \in\hom(A, B), b\in B \end{matrix} 
  \right\} \]  
into a $*$-algebra for the obvious matrix adjoint and multiplication.  Any $*$-functor $\mc A \to \catHilb$ will give a $*$-representation of $M(A,B)$ on a Hilbert space, and so \cite[Proposition~1.14]{GLR_Wstar_categories} or \cite[Section~6]{mitchener_cstar_cats} shows that $M(A,B)$ can be given a $C^*$-norm, which is of course unique once we know existence.  See the end of \cite[Section~1]{GLR_Wstar_categories} for more remarks.

\begin{remark}\label{rem:coms_iso}
Let us introduce the terminology that a \emph{corner} or \emph{component} of $M(A,B)$ is one of the sub-spaces $A,B,\hom(A,B)$ or $\hom(B,A)$.  Suppose $M(A,B)$ has a norm making it a $C^*$-algebra.  Then the canonical map $A \to M(A,B)$ is an injective $*$-homomorphism between $C^*$-algebras, and so an isometry.  Similarly $B\to M(A,B)$.  Given $x\in\hom(A,B)$ we then have
\[ \left\|\smattwo{0}{0}{x}{0}\right\|^2
= \left\| \smattwo{0}{x^*}{0}{0} \smattwo{0}{0}{x}{0} \right\|
= \left\| \smattwo{x^*x}{0}{0}{0} \right\|
= \|x^*x\| = \|x\|^2, \]
using the $C^*$-condition, that $A\to M(A,B)$ is an isometry, and the $C^*$-category axioms.  Thus $\hom(A,B) \to M(A,B)$ is automatically an isometry; similarly $\hom(B,A) \to M(A,B)$.
\end{remark}

\begin{remark}\label{rem:contractive_proj}
The elements $\smattwo{1}{0}{0}{0}$ and $\smattwo{0}{0}{0}{1}$ are projections in $M(A,B)$, and so, for example,
\[ M(A,B) \to M(A,B); \quad \smattwo{a}{x^*}{y}{b} \mapsto 
\smattwo{1}{0}{0}{0} \smattwo{a}{x^*}{y}{b} \smattwo{0}{0}{0}{1}
= \smattwo{0}{x^*}{0}{0} \qquad (a\in A,b\in B,x,y\in\hom(A,B)), \]
gives a contractive projection of $M(A,B)$ onto the image of $\hom(B,A)$.  Similarly the images of $A,B$ and $\hom(A,B)$ are the images of contractive projections.
\end{remark}

Consider a $*$-representation $M(A,B) \to \mc B(H_0)$ for some Hilbert space $H_0$.  By using the projections from Remark~\ref{rem:contractive_proj} (compare the argument in Appendix~\ref{sec:biduals}) one can show that $H_0 \cong H\oplus K$ for some Hilbert spaces $H,K$ and that $M(A,B)$, viewed as a matrix algebra, acts naturally on $H\oplus K$.  That is, $A\subseteq\mc B(H)$ is a non-degenerate $*$-representation, similarly $B\subseteq\mc B(K)$, and $\hom(A,B) \subseteq \mc B(H,K), \hom(B,A) \subseteq \mc B(K,H)$.  In particular, given $x,y,z\in\hom(A,B)$ we have that $y^*\in\hom(B,A)$ so $xy^*\in B$ and $xy^*z \in \hom(A,B)$.  So $\hom(A,B) \subseteq \mc B(H,K)$ is a Ternary Ring of Operators (TRO).

There is an abstract theory of TROs, termed \emph{ternary $C^*$-rings}, studied by Zettl in \cite{Zettl_TROs}.  We believe Zettl's results could be used in the following, but they appear difficult to apply (not least because of the need to rule out the possibility that the operator $T$ from \cite[Theorem~3]{Zettl_TROs} is non-trivial).  Instead, we shall follow the theory for TROs developed in \cite[Section~2]{EOR_InjectivityNuclearityOS}.

\section{Dual TROs}\label{sec:dualTROs}

In this section, we mostly follow the notation of \cite{EOR_InjectivityNuclearityOS}.  So, $V\subseteq \mc B(K,H)$ is a TRO: a norm closed subspace with the property that $xy^*z\in V$ for each $x,y,z\in V$.  Denote
\[ V^\sharp = \{ x^* : x\in V \} \subseteq \mc B(H,K) \]
which is also a TRO.  We write $V^*$ for the Banach space dual of $V$.  The TRO property shows that $V V^\sharp = \lin\{ xy : x\in V, y\in V^\sharp \}$ is a $*$-subalgebra of $\mc B(H)$, so defining $C$ to be the closure, we obtain a $C^*$-algebra.  Similarly $D = \overline{V^\sharp V} \subseteq \mc B(K)$ is a $C^*$-algebra.  These algebras may not be unital, but if we wish, there is no loss of generality in supposing that they are non-degeneate, see Appendix~\ref{sec:make_nondeg}.  As $V$ is norm-closed, we have that $CV \subseteq V, VD\subseteq V$ and so $V$ is a $C$-$D$-bimodule.

When $V$ is weak$^*$-closed in $\mc B(K,H)$ (for the weak$^*$-topology given by identifying $\mc B(K,H)$ with the dual space of the trace-class operators $H\to K$) we obtain a \emph{$W^*$-TRO}.  When $V$ is isometrically isomorphic to a dual Banach space, we say that $V$ is a \emph{dual TRO}.  Write $V_*$ for the Banach space with $V = (V_*)^*$.  Clearly a $W^*$-TRO is a dual TRO.  The main result of \cite[Section~2]{EOR_InjectivityNuclearityOS} is that the converse holds.

Define the ``linking algebra'' of $V$ (in analogy with the construction for Hilbert $C^*$-modules, \cite[Chapter~2]{RaeburnWilliams_MoritaEquivBook}; compare also the definition of $M(A,B)$ from Section~\ref{sec:cats_to_TROs}, and \cite[Section~2]{EOR_InjectivityNuclearityOS}) to be
\[ L(V) = \begin{pmatrix} C & V \\ V^\sharp & D \end{pmatrix}, \]
with the obvious product and $*$-operation.  As $V\subseteq\mc B(K,H)$ we have a natural $*$-representation of $L(V)$ on $H\oplus K$ (which again we may suppose is non-degenerate, by Appendix~\ref{sec:make_nondeg}).  When $V$ is a $W^*$-TRO it is natural to consider $\overline{C}^\sigma$, the weak$^*$-closure of $C$, and similarly for $D$, and so to consider
\[ L_w(V) = \begin{pmatrix} \overline{C}^\sigma & V \\ V^\sharp & \overline{D}^\sigma \end{pmatrix} \subseteq \mc B(H\oplus K). \]
As argued on \cite[pages~493--494]{EOR_InjectivityNuclearityOS}, as $V$ is weak$^*$-closed and as composition of operators is separately weak$^*$-continuous, we have that $\overline{C}^\sigma V \subseteq V$.  Similarly $V \overline{D}^\sigma \subseteq V$, and so $L_w(V)$ is a $*$-algebra.  Notice that $L_w(V)$ is weak$^*$-closed and is the weak$^*$-closure of $L(V)$.  If $V$ is non-degenerate, then $L_w(V)$ is unital, $\overline{C}^\sigma = C''$ and the same for $D$, and so $L_w(V) = L(V)''$ is a von Neumann algebra.

There appears to be a gap in \cite[Proposition~2.4]{EOR_InjectivityNuclearityOS}: in the proof, it is stated, without justification, that certain $C^*$-algebras $V_{(r)}$ are von Neumann algebras.  We now indicate how to show this property, hence giving full justification to the result.  We first continue to introduce some of the concepts involved.

Given a TRO $V$ and a partial isometry $r\in V$ let
\[ V_{(r)} = \{ x\in V : rr^* x = x = x r^*r \} = \{ rr^* x r^*r : x\in V \}, \]
the equality following from the fact that $rr^*, r^*r$ are projections.  This is a unital $C^*$-algebra for the product and involution
\[ x \bullet y = xr^*y, \quad x^\flat = rx^*r \qquad (x,y\in V_{(r)}), \]
with the unit being the element $r$.  The following is mentioned in a number of places, but not proved, so for completeness we give a proof.

\begin{lemma}
The map $\theta \colon V_{(r)} \to rr^* C rr^*; x \mapsto xr^*$ is a $*$-isomorphism with inverse $a\mapsto ar$.  This shows in particular that $V_{(r)}$ is a $C^*$-algebra.
\end{lemma}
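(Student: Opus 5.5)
The plan is to check directly that $\theta$ lands in $rr^*Crr^*$, that $a\mapsto ar$ lands in $V_{(r)}$, that the two maps are mutually inverse, and that $\theta$ intertwines the operations. Then $V_{(r)}$ is $*$-isomorphic to a corner of a $C^*$-algebra and so is a $C^*$-algebra. It is convenient to write $p=rr^*$ and $q=r^*r$; these are projections because $r$ is a partial isometry, and $rq=r=pr$.

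First, well-definedness. For $x\in V_{(r)}$ we have $xr^*\in VV^\sharp\subseteq C$. Moreover $p(xr^*)p = pxr^*rr^* = xr^*$, using $px=x$ and $r^*rr^*=r^*$. Hence $\theta(x)\in pCp$.

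Conversely, take $a\in pCp$. This is the step I expect to need the most care, because $C$ is only the norm closure of $VV^\sharp$. For $a=yz^*$ with $y,z\in V$, the TRO property gives $ar=yz^*r\in V$. By linearity and continuity, together with norm-closedness of $V$, we get $ar\in V$ for every $a\in C$. For $a\in pCp$ we then compute
\[ p(ar)=ar, \qquad (ar)q = ar r^*r = a r = apr = ar, \]
using $a=ap$ and $pr=r$. So $ar\in V_{(r)}$.

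Next, the two maps are inverse. For $x\in V_{(r)}$ we have $\theta(x)r = xr^*r = xq = x$. For $a\in pCp$ we have $\theta(ar)=arr^*=ap=a$.

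Finally, the algebraic structure is preserved. Linearity is clear. Multiplicativity is the identity
\[ \theta(x\bullet y) = xr^*yr^* = \theta(x)\theta(y). \]
For the involution,
\[ \theta(x^\flat)=rx^*rr^* = rx^*p = r(px)^* = rx^* = (xr^*)^* = \theta(x)^*. \]
The unit $r$ is sent to $\theta(r)=rr^*=p$, which is the unit of $pCp$, as it should be.

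It remains to transport the norm. Here $pCp$ is a $C^*$-subalgebra of $C$, and a hereditary one. We have $\|\theta(x)\|=\|xr^*\|\le\|x\|$, and $\|x\|=\|\theta(x)r\|\le\|\theta(x)\|$, so $\theta$ is isometric for the operator norm on $V$. Therefore $V_{(r)}$, equipped with $\bullet$, $\flat$ and its original norm, is isometrically $*$-isomorphic to the $C^*$-algebra $pCp$, and is itself a unital $C^*$-algebra.
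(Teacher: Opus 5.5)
Your proposal is correct and follows essentially the same route as the paper. You check that $\theta$ lands in $rr^*Crr^*$, use the TRO property and norm-closedness of $V$ to get $ar\in V_{(r)}$ for $a\in rr^*Crr^*$, verify that the two maps are mutually inverse and intertwine $\bullet,\flat$ with the product and adjoint, and deduce isometry from $\|r\|,\|r^*\|\le 1$; the only cosmetic difference is that you obtain $ar\in V$ from $yz^*r\in V$ directly rather than via $rr^*yz^*r$.
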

\begin{proof}
As $rr^*$ is a projection in $C$, we see that $rr^* C rr^*$ is a $C^*$-algebra, with unit $rr^*$.  For any $x\in V$, indeed $xr^* \in C$, and when $x\in V_{(r)}$ we have that $rr^*(xr^*)rr^* = rr^* x r^* = xr^*$ so $\theta$ does map $V_{(r)}$ to $rr^* C rr^*$.  Also $\theta(x^\flat) = rx^*rr^* = r(rr^*x)^* = rx^* = \theta(x)^*$ and for $x,y\in V_{(r)}$ we have $\theta(x\bullet y) = xr^*yr^* = \theta(x)\theta(y)$, so $\theta$ is a $*$-homomorphism.

Let $a = yz^*$ for some $y,z\in V$ and consider $x = (rr^* a rr^*) r$.  Then $x = rr^* yz^* r = rr^* (yz^* r) \in V$ by the TRO property.  As $V$ is norm closed, it follows that for any $a\in C$ we have that $x = (rr^* a rr^*) r \in V$, and so for any $a\in rr^* C rr^*$ we have that $x = ar \in V$.  For such an $x$ we now see that $rr^* x = rr^* a r = ar$ and $x r^*r = arr^*r = ar = x$ so that $x\in V_{(r)}$.  Finally, $\theta(x) = arr^* = a$ and so we conclude that $\theta$ is a bijection.

As $\|r^*\|\leq 1$ we see that $\theta$ is a contraction, and similarly $\theta^{-1}$ is a contraction, so $\theta$ is an isometry.  Hence the given norm on $V_{(r)}$ satisfies the $C^*$-condition.
\end{proof}

To show that $V_{(r)}$ is a $W^*$-algebra when $V$ is a dual TRO, we shall use the theory of conditional expectations, see \cite[Theorem~2.5]{EOR_InjectivityNuclearityOS}.  When $V \subseteq W$ is a sub-TRO, an idempotent $P \colon W \to W$ with image equal to $V$ is a \emph{conditional expectation} when:
\begin{itemize}
    \item[C1:] $P(xy^*w) = xy^*P(w)$ for $x,y\in V, w\in W$;
    \item[C2:] $P(wx^*y) = P(w) x^*y$ for $x,y\in V, w\in W$;
    \item[C3:] $P(xw^*y) = xP(w)^*y$ for $x,y\in V, w\in W$.
\end{itemize}
We shall not use axiom C3.  A careful examination of the proof of \cite[Theorem~2.5]{EOR_InjectivityNuclearityOS} shows that it does not depend upon \cite[Proposition~2.4]{EOR_InjectivityNuclearityOS}; indeed, we shall only use the claim that when $P$ is contractive, it has properties C1 and C2, and this claim has a fairly simple proof (showing C3 is the harder part).

\begin{proposition}\label{prop:Vr_vnalg}
Let $V$ be a dual TRO, with isometric predual $V_*$.  Regarding $V$ as a $C$-$D$-bimodule, so that $V^*$ is a $D$-$C$-bimodule, we have that $V_* \subseteq V^*$ is a submodule.  Given $r\in V$ a partial isometry, the $C^*$-algebra $V_{(r)}$ is, as a subspace of $V$, weak$^*$-closed, and hence is a $W^*$-algebra.
\end{proposition}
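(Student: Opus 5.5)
The plan is to obtain the submodule property from a contractive projection on the bidual, using only the "easy half" of \cite[Theorem~2.5]{EOR_InjectivityNuclearityOS}: contractive conditional-type projections satisfy C1 and C2. Let $\iota\colon V_*\to V^*$ be the canonical inclusion, which is isometric. Let $\kappa\colon V\to V^{**}$ be the canonical embedding. Since $V=(V_*)^*$, the Banach adjoint $\iota^*\colon V^{**}\to V$ is a weak$^*$-continuous contraction with $\iota^*\circ\kappa=\mathrm{id}_V$. Hence $Q=\kappa\circ\iota^*$ is a contractive idempotent on $W=V^{**}$ with image $\kappa(V)$, and $\ker Q=\ker\iota^*=(V_*)^\perp$.

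To apply C1 and C2 I need $W$ to be a TRO, indeed a $W^*$-TRO, containing $\kappa(V)$ as a sub-TRO. I will get this by passing to the bidual of the linking algebra $L(V)$, which is a von Neumann algebra (compare the appendix on biduals). Its off-diagonal corner is $V^{**}$, the embedding $\kappa$ is the restriction of the canonical $*$-homomorphism $L(V)\to L(V)^{**}$, and so $\kappa$ respects triple products. In this setting $V^{**}$ is a $C^{**}$-$D^{**}$-bimodule, and the contractive projection $Q$ then satisfies C1:
\[ Q(xy^*w)=xy^*Q(w) \qquad (x,y\in V,\ w\in W), \]
and similarly C2. I expect this step to be the main obstacle. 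One must check carefully that the bidual TRO structure is the one coming from the Arens products, and that the proof of C1 and C2 for contractive projections in \cite{EOR_InjectivityNuclearityOS} genuinely applies to $Q$ without invoking \cite[Proposition~2.4]{EOR_InjectivityNuclearityOS}.

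Granting C1, take $a=xy^*$ and $w\in(V_*)^\perp$. Then $Q(aw)=aQ(w)=0$, so $a\cdot(V_*)^\perp\subseteq(V_*)^\perp$. For $\mu\in V_*$, the functional $\iota(\mu)\cdot a\in V^*$, defined by $v\mapsto\ip{\mu}{av}$, therefore annihilates $(V_*)^\perp$ when $V^*$ is paired with $V^{**}$. Since $V_*$ is norm closed in $V^*$, the bipolar theorem gives ${}^\perp((V_*)^\perp)=V_*$, and so $\iota(\mu)\cdot a\in V_*$. Finite sums and norm limits extend this from elements of the form $xy^*$ to all $a\in C$, because the module action is norm continuous and $V_*$ is norm closed. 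C2 gives the corresponding statement for the action of $D$ in the same way. This proves that $V_*$ is a $D$-$C$-submodule of $V^*$.

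Equivalently, for each $c\in C$ and $d\in D$ the maps $v\mapsto cv$ and $v\mapsto vd$ are weak$^*$-continuous on $V$. Now let $r\in V$ be a partial isometry. Then $rr^*\in C$ and $r^*r\in D$, so
\[ V_{(r)}=\{v\in V: rr^*v-v=0\}\cap\{v\in V: vr^*r-v=0\} \]
is an intersection of kernels of weak$^*$-continuous maps, and is therefore weak$^*$-closed in $V$. Consequently $V_{(r)}$ is isometrically the dual of $V_*/(V_{(r)})_\perp$. By the preceding Lemma, $V_{(r)}$ is a $C^*$-algebra whose norm is the norm of $V$. Hence $V_{(r)}$ is a $C^*$-algebra isometrically isomorphic to a dual Banach space, that is, a $W^*$-algebra.
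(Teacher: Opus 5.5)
Your proposal is correct and follows essentially the same route as the paper. You use the adjoint of the inclusion $V_*\to V^*$ as a contractive idempotent on the $W^*$-TRO $V^{**}$, invoke only C1 and C2 from \cite[Theorem~2.5]{EOR_InjectivityNuclearityOS}, and then apply Hahn--Banach (your bipolar formulation is the paper's contradiction argument rephrased) to show $V_*$ is a submodule, from which weak$^*$-closedness of $V_{(r)}$ follows via kernels rather than the paper's net argument.
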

\begin{proof}
The canonical map $V_* \to V_*^{**} = V^*$ has adjoint $P \colon V^{**} \to V$ which is a (contractive) idempotent, once we identify $V$ with a subspace of $V^{**}$ in the canonical way.  We have that $V^{**}$ is a $W^*$-TRO; see Appendix~\ref{sec:biduals} for example.  By \cite[Theorem~2.5]{EOR_InjectivityNuclearityOS}, see the discussion above, $P$ is a conditional expectation, in particular, $P(c\Phi) = c P(\Phi)$ and $P(\Phi d) = P(\Phi) d$ for $\Phi \in V^{**}, c\in C, d \in D$.

Let $\mu\in V_*$ and $d\in D$, so as $V^*$ is a bimodule, $d\cdot\mu\in V^*$.  If $d\cdot\mu \not\in V_*$ then by Hahn-Banach there is $\Phi \in V^{**}$ which annihilates $V_*$ and yet $\Phi(d\cdot\mu) = 1$.  However, then
\[ 1 = \ip{\Phi}{d\cdot \mu} = \ip{\Phi d}{\mu}_{(V^{**}, V^*)} = \ip{P(\Phi d)}{\mu}_{(V,V_*)}
= \ip{P(\Phi)d}{\mu}_{(V,V_*)}, \]
where in brackets we indicate which pairs of Banach spaces are in duality.  That $\Phi$ annihilates $V_*$ means exactly that $P(\Phi) = 0$, but this then gives a contradiction.  Similarly $\mu\cdot c\in V_*$ for each $c\in C$.  We have hence shown that $V_*$ is a submodule.

Let $(x_i)$ be a net in $V_{(r)}$ converging weak$^*$ to $x\in V$.  Thus $x_i = rr^* x_i r^*r$ for each $i$, and so for $\mu\in V_*$, as $r^*r \cdot \mu \cdot rr^* \in V_*$, we see that
\[ \ip{rr^* x r^*r}{\mu} = \ip{x}{r^*r \cdot \mu \cdot rr^*} = \lim_i \ip{x_i}{r^*r \cdot \mu \cdot rr^*}
= \lim_i \ip{rr^* x_i r^*r}{\mu}
= \lim_i \ip{x_i}{\mu} = \ip{x}{\mu}. \]
Hence $rr^* x r^*r = x$ and so $x\in V_{(r)}$.  So $V_{(r)}$ is weak$^*$-closed, and so an (isometric) dual space.
\end{proof}

Let us briefly mention the use of the algebras $V_{(r)}$: that $V$ is dual means that the unit ball of $V$ has many extreme points, and these turn out to be partial isometries, see \cite[Lemma~1.3]{Zettl_TROs}, and this tells us about elements of $V_*$, see \cite[Proposition~4.3]{Zettl_TROs} and \cite[Proposition~2.3]{EOR_InjectivityNuclearityOS}.

However, we shall not use these results directly.  Now we have repaired \cite[Proposition~2.4]{EOR_InjectivityNuclearityOS}, we may use \cite[Theorem~2.6]{EOR_InjectivityNuclearityOS} (see also \cite[Corollary~3.5]{BM_Duality_OpAlgI} for an alternative proof) which in particular tells us that a dual TRO is a $W^*$-TRO for some representation $V \subseteq \mc B(H,K)$, with the weak$^*$-topologies being equal.

\subsection{Multiplier algebras}\label{sec:mults}

Given a $C^*$-algebra $A$ recall the \emph{multiplier algebra} $M(A)$.  This can be realised in a number of different ways:
\begin{itemize}
  \item Given Hilbert $C^*$-modules $E,F$, recall that a linear map $t\colon E\to F$ is adjointable when there is a linear map $t^* \colon F \to E$ with $(x|t(y)) = (t^*(x)|y)$ for each $x\in F, y\in E$.  Then $t$ and $t^*$ are bounded and $(t^*)^*=t$.  We write $\mc L(E,F)$ the space of such maps; $\mc L(E)$ is a $C^*$-algebra.  For $x\in F,y\in E$ we have an operator $\theta_{x,y} \colon E\to F; z\mapsto x \cdot (y|z)$, and then $\mc K(E,F)$, the compact operators, is the closed linear span of such maps.  We have that $\mc K(E)$ is an ideal in $\mc L(E)$, and $M(\mc K(E)) = \mc L(E)$.  See \cite[Chapter~1]{Lance_HilbModsBook} for example.
  
  Regarding $A$ as a Hilbert $C^*$-module over itself for the inner product $(a|b) = a^*b$, we have that $\mc K(A)\cong A$ for the continuous linear extension of the map $\theta_{a,b} \mapsto ab^*$, and so we can realise $M(A)$ as $\mc L(A)$.  See \cite[Chapter~2]{Lance_HilbModsBook} for example.

  \item When $A \subseteq \mc B(H)$ non-degenerately, for some Hilbert space $H$, we have that $M(A)$ is isomorphic to the $C^*$-algebra
  \[ \bigl\{ x\in\mc B(H) : xa, ax\in A \ (a\in A) \bigr\}, \]
  the \emph{centraliser algebra} of $A$.  Then in fact $M(A) \subseteq A'' \subseteq \mc B(H)$.  See \cite[Proposition~3.12.3]{PedersenBook} for example.
\end{itemize}

There is a surprisingly link between $L(V), L_w(V)$ and multiplier algebras, which is outlined in the appendix of \cite{EOR_InjectivityNuclearityOS}.  Again, there is a point we do not follow, and so we give a different proof.  (At the bottom of \cite[page~517]{EOR_InjectivityNuclearityOS}, it is written that ``$MV\subseteq V$ and $VM\subseteq V$'' but we do not see why this is so: rather we think that $V$ is only a left $M$ module, and so we do not follow the subsequent short argument that ``$M=M(C)$''.)

Again let $V\subseteq\mc B(K,H)$ be a TRO, and let $D = \overline{V^\sharp V}$ and $C = \overline{V V^\sharp}$ which are $C^*$-algebras acting on $K$ and $H$ respectively.  Then $V$ is a $C$-$D$-bimodule, and as explained in \cite{EOR_InjectivityNuclearityOS}, it is a faithful bimodule (so if $c\cdot v=0$ for all $v\in V$, then $c=0$, and similarly for the $D$ action.)  An examination of the proof shows that we do not require $V$ to be non-degenerate.

Also $V$ becomes a Hilbert $C^*$-module over $D$ for the inner-product $(u|v) = u^*v$.  Let $\mc K_0(V) = \lin\{\theta_{u,v} : u,v\in V \}$ a dense $*$-subalgebra of the compact operators $\mc K(V)$.  Given $\theta_{u,v}\in\mc K_0(V)$, for $w\in V$ we have $\theta_{u,v}(w) = u\cdot(v|w) = uv^*w = (uv^*)\cdot w$, and so there is a map $\pi \colon \mc K_0(V) \to VV^\sharp \subseteq C; \theta_{u,v} \mapsto uv^*$.  This is well-defined as $V$ is a faithful $C$-module, and also injective (hence bijective) for the same reason; one checks that $\pi$ is a $*$-homomorphism.  For $\theta\in\mc K_0(V), w\in V$ we have that $\theta(w) = \pi(\theta)\cdot w$ and so $\|\theta\|_{\mc K(V)} \leq \|\pi(\theta)\|_C$.  Hence by continuity, $\pi^{-1}$ extends to a $*$-homomorphism $C \to \mc K(V)$, which is again injective as $V$ is faithful.  As $C$ is a $C^*$-algebra, we in fact have an isometric $*$-isomorphism $C \to \mc K(V)$.  In particular, note that the norm on $\mc K(V)$ depends only on $V$, and hence the norm on $VV^\sharp \subseteq C$ (and hence on $C$ itself) depends only on $V$, not the particular embedding $V \subseteq \mc B(K,H)$.

Then $M(C) \cong M(\mc K(V)) \cong \mc L(V)$.  Now suppose that $V \subseteq \mc B(H,K)$ is non-degenerate, so also $C$ acts non-degenerately on $H$.  As above, we have
\[ M(C) \cong \{ x\in C'' : xc, cx\in C \ (c\in C \subseteq C'') \}. \]
Given $x\in M(C) \subseteq C'' \subseteq \mc B(K)$, let $T \in \mc L(V)$ be the related adjointable map, which satisfies that
\[ x (uv^*) w = T(uv^*w) = T \theta_{u,v}(w) = \theta_{T(u), v} w = T(u) v^* w \qquad (u,v,w\in V), \]
and so $xu = T(u)$ for $u\in V$.  That is, the left action of $M(C) \cong \mc L(V)$ on $V \subseteq \mc B(K,H)$ is just composition of operators.

When $V$ is weak$^*$-closed in $\mc B(K,H)$, as above (also \cite[pages~493--494]{EOR_InjectivityNuclearityOS}), letting $C'' = \overline{C}^\sigma$ be the weak$^*$-closure of $C$ in $\mc B(H)$, we have that $V$ is a left $C''$ module.  Given $x\in C''$, as $x V \subseteq V$, left multiplication by $x$ induces a map $T\colon V\to V$; similarly $x^*$ gives a map $S\colon V \to V$.  We then see that
\[ (S(u)|v) = (x^*u|v) = (x^*u)^*v = u^*xv = (u|xv) = (u|T(v)) \qquad (u,v\in V). \]
So $T$ is adjointable with $T^* = S$.  The resulting map $C'' \to \mc L(V)$ is injective, for if $x V = \{0\}$ for some $x\in C''$, then $x VV^\sharp=\{0\}$ so $xC=\{0\}$ and hence $x=0$ as $C$ acts non-degenerately on $H$.  So we have an inclusion $C'' \subseteq \mc L(V)$ and an inclusion $\mc L(V) \subseteq C''$ and the composition is the identity.  We conclude that $\mc L(V) = C''$.

Similarly $V$ becomes a left Hilbert $C^*$-module over $C$, with $D$ the compact operators, and $D''$ the adjointable maps.  We conclude that $C''$ and $D''$ can be constructed just from $V$, independent of the (non-degenerate) embedding $V \to \mc B(K,H)$.

\section{The algebra of a \texorpdfstring{$W^*$}{WStar}-Category is \texorpdfstring{$W^*$}{WStar}}\label{sec:main}

In this section we state and prove our main result.  The technique used follows on from Section~\ref{sec:mults}, making use of multiplier algebras, for which the next lemma is useful.

\begin{lemma}
Let $M$ be a von Neumann algebra and $A \subseteq M$ a $C^*$-subalgebra which is also an ideal.  Denote by $\overline{A}^\sigma$ the weak$^*$-closure of $A$ in $M$.  There is a central projection $z\in M$ such that $\overline{A}^\sigma = zM$, and such that the $*$-homomorphism $M \to M(A)$ given by multiplication (as $A$ is an ideal in $M$) restricts to an isomorphism $zM \cong M(A)$.
\end{lemma}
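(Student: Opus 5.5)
First I show that $\overline{A}^\sigma$ is a weak$^*$-closed two-sided ideal of $M$, and hence of the form $zM$ for a central projection $z$. Multiplication in $M$ is separately weak$^*$-continuous, and $A$ is an ideal with $MA\subseteq A$ and $AM\subseteq A$. Taking limits gives $M\overline{A}^\sigma \subseteq \overline{A}^\sigma$ and $\overline{A}^\sigma M\subseteq\overline{A}^\sigma$. The closure is also self-adjoint, since the adjoint is weak$^*$-continuous. By the standard structure theorem for weak$^*$-closed ideals in a von Neumann algebra (e.g.\ \cite{TakesakiI}), $\overline{A}^\sigma = zM$ for a unique central projection $z$. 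Concretely, $z$ is the weak$^*$-limit of an increasing approximate unit $(e_i)$ of $A$, that is, $z=\sup_i e_i$. For $a\in A$ we then have $za=\lim e_i a = a$ in norm, so $za=az=a$.

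Next I define the map. For $m\in M$, let $L_m(a)=ma$ and $R_m(a)=am$. Since $A$ is an ideal, $(L_m,R_m)$ is a double centraliser of $A$, and $\phi\colon M\to M(A)$, $m\mapsto (L_m,R_m)$, is a $*$-homomorphism. Equivalently, if $A\subseteq\mc B(H)$ is represented non-degenerately on $zH$, then $\phi(m)=mz$ viewed in the centraliser algebra of Section~\ref{sec:mults}. Because $za=a$, we have $\phi(m)=\phi(zm)$, so $\phi$ is determined by its restriction to $zM$.

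For injectivity on $zM$, suppose $m\in zM$ and $mA=\{0\}$. By separate weak$^*$-continuity, $m\overline{A}^\sigma=\{0\}$. Since $z\in\overline{A}^\sigma$, this gives $m=mz=0$.

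Surjectivity is the main step. I use the concrete picture: represent $M\subseteq\mc B(H)$, and set $H_0=zH$, which equals $\overline{AH}$ because $z=\sup e_i$. Then $A$ acts non-degenerately on $H_0$, and by \cite[Proposition~3.12.3]{PedersenBook}, $M(A)$ is the centraliser algebra of $A$ inside $A''\subseteq\mc B(H_0)$. Here $A''$ is computed in $\mc B(H_0)$, and it coincides with the weak$^*$-closure of $A$ there, since $A$ is non-degenerate on $H_0$. Restriction to $H_0$ is a weak$^*$-homeomorphic $*$-isomorphism from $zM=\overline{A}^\sigma$ onto the weak$^*$-closure of $A$ in $\mc B(H_0)$. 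So every multiplier $x\in M(A)\subseteq A''$ is the restriction of some $m\in zM$. One must check that this identification matches $\phi$, i.e.\ that the restriction of $m$ multiplies $A$ exactly as $m$ does. This holds because $A$ is supported on $H_0$: $ma = mza$, and $am$ restricted to $H_0$ agrees, using the centrality of $z$.

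The delicate point I expect to be the main obstacle is justifying that restriction to $zH$ is injective on $zM$ and has weak$^*$-closed image equal to $A''$ on $H_0$. Injectivity follows from the previous paragraph. For the image, the map $zM\to\mc B(zH)$ is a normal $*$-homomorphism, so its image is weak$^*$-closed and contains $A$ as a weak$^*$-dense subset, which gives equality with $A''$.
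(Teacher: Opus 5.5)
Your proof is correct. The first half matches the paper: $\overline{A}^\sigma$ is a weak$^*$-closed ideal equal to $zM$, $\phi$ kills $(1-z)M$, and $\phi$ is injective on $zM$ because $z\in\overline{A}^\sigma$.

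The routes differ at surjectivity. The paper stays abstract. Given $\alpha\in M(A)$ and an approximate identity $(e_i)$ of $A$, the net $(\alpha e_i)$ is bounded in $A\subseteq M$, so a subnet converges weak$^*$ to some $x\in\overline{A}^\sigma=zM$. Then $xa=\lim_i\alpha e_ia=\alpha a$, and $(ax)b=a(\alpha b)=(a\alpha)b$ gives $ax=a\alpha$, hence $\phi(x)=\alpha$.

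You instead argue spatially. You identify $zH=\overline{AH}$ and use Pedersen's description of $M(A)$ as the idealiser of $A$ inside $A''\subseteq\mc B(zH)$. You then show that compression to $zH$ carries $zM$ onto $A''$, since a normal image is weak$^*$-closed and contains $A$ densely. This gives a nice picture: $zM$ is exactly $A''$ on the essential subspace, and all of it idealises $A$. It parallels the two-way-inclusion argument in Section~\ref{sec:mults} giving $\mc L(V)=C''$.

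Your route costs more machinery: a concrete normal representation, Vigier's theorem (for $z=\sup_ie_i$ and $zH=\overline{AH}$), the bicommutant theorem, Pedersen's idealiser result, and weak$^*$-closedness of normal images. The concrete representation is a real cost in context, because the lemma is applied in Theorem~\ref{thm:main} to an abstract $W^*$-algebra, so you would need Sakai. The paper's argument uses only Banach--Alaoglu and separate weak$^*$-continuity of the product, so it works verbatim for $W^*$-algebras.

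Two small remarks. First, $za=a$ needs no approximate unit, since $a\in A\subseteq zM$ already. Second, your terse compatibility check is fine once you note two facts. The map $a\mapsto a|_{zH}$ is injective on $A$ because $a=az$. Also, both $a$ and $m\in zM$ map $H$ into $zH$, so $(ma)|_{zH}=m|_{zH}\,a|_{zH}$ and $(am)|_{zH}=a|_{zH}\,m|_{zH}$.
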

\begin{proof}
As the product on $M$ is separately weak$^*$-continuous, $\overline{A}^\sigma$ is also an ideal in $M$, and being weak$^*$-closed, there is a central projection $z\in M$ with $zM = \overline{A}^\sigma$, \cite[Proposition~II.3.12]{TakesakiI}.  In particular, for $a\in A$ we have $a=za$ and so $(1-z)a = a(1-z) = 0$.

Let $\pi \colon M \to M(A)$ be the $*$-homomorphism given by multiplication, say $\pi(x)a = xa\in A$ for $x\in M, a\in A$, and so forth.  Then $\pi((1-z)M) = \{0\}$, while if $\pi(x)=0$ then $xa=ax=0$ for each $a\in A$, so also $x\overline{A}^\sigma = \overline{A}^\sigma x = \{0\}$ so $xzM = \{0\}$ so $xz=0$.  Hence $\ker(\pi) = (1-z)M$ and the restriction $\pi \colon zM \to M(A)$ is injective.  Given $\alpha\in M(A)$ and an approximate identity $(e_i)$ in $A$, we have that $(\alpha e_i)$ is a bounded net in $A \subseteq M$, and so, by passing to a subnet if necessary, converges weak$^*$ to $x\in \overline{A}^\sigma = zM$ say.  For $a\in A$ we have $xa = \lim_i \alpha e_i a = \alpha a$, and then for $a,b\in A$ we have $(ax)b = a(xb) = a(\alpha b) = (a\alpha)b$, so also $ax = a\alpha$, that is, $\pi(x) = \alpha$.  So $M(A) \cong zM$ as claimed.
\end{proof}

\begin{theorem}\label{thm:main}
Let $\mc A$ be a $W^*$-category, and let $A,B \in \mc A$.  The algebra $M(A,B)$ is a $W^*$-algebra, and the isometric inclusions $A, B \to M(A,B)$ and $\hom(A,B), \hom(B,A) \to M(A,B)$ are weak$^*$-weak$^*$-homeomorphisms onto their ranges.
\end{theorem}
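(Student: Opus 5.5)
We plan to reduce to dual TROs and then use the multiplier lemma just proved. The key object is the TRO
\[ V = \hom(A,B)\oplus\ldots \]
but a cleaner choice is the TRO $V = \begin{pmatrix} A & \hom(B,A) \end{pmatrix}$, i.e.\ the first row of $M(A,B)$, viewed inside $\mc B(H\oplus K, H)$ via a faithful representation of $M(A,B)$ as in Section~\ref{sec:cats_to_TROs}. It is a TRO, since for row elements $x,y,z$ we get $xy^*\in A$ (using $\hom(B,A)\hom(B,A)^\sharp\subseteq A$) and hence $xy^*z$ again lies in the first row. Each component $A$ and $\hom(B,A)$ is an isometric dual space. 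The $\ell^\infty$-direct sum of the two preduals' duals gives $V$ a dual \emph{Banach space} structure, but only up to equivalence of norms, since the row norm is not the max norm. So we must not argue this way.

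Instead we work with $W=\hom(B,A)$ alone, which is a dual TRO in $\mc B(K,H)$ with isometric predual. By Proposition~\ref{prop:Vr_vnalg} and \cite[Theorem~2.6]{EOR_InjectivityNuclearityOS}, there is a representation $W\subseteq\mc B(K',H')$ as a $W^*$-TRO whose weak$^*$-topology agrees with the given one. We may take it non-degenerate by Appendix~\ref{sec:make_nondeg}. Then $L_w(W)=\smattwo{C''}{W}{W^\sharp}{D''}$ is a von Neumann algebra. By Section~\ref{sec:mults}, $C''\cong\mc L(W)\cong M(C)$ and $D''\cong M(D)$, where $C=\overline{WW^\sharp}$ and $D=\overline{W^\sharp W}$, and these are determined by $W$ alone. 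The ideals $C\subseteq A$ and $D\subseteq B$ come from the category: $C$ is the closed span of $\hom(B,A)\hom(A,B)$, an ideal in $A$, and similarly $D$ in $B$. Now apply the previous Lemma with $M=A$. Here $A$ is a $W^*$-algebra, hence a von Neumann algebra by Sakai. We obtain a central projection $z_A\in A$ with $z_AA=\overline{C}^\sigma\cong M(C)\cong C''$, and similarly $z_BB\cong D''$.

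The main task is then to decompose. We write
\[ M(A,B)\cong \smattwo{z_AA}{\hom(B,A)}{\hom(A,B)}{z_BB}\oplus (1-z_A)A\oplus(1-z_B)B. \]
This holds because $(1-z_A)\hom(B,A)=0$, since $(1-z_A)$ kills $C$ and hence $\hom(B,A)\hom(B,A)^*$. The first summand is identified, as a $*$-algebra, with $L_w(W)$ through the isomorphisms $z_AA\cong C''$ and $z_BB\cong D''$. These are compatible with the module actions because, as shown in Section~\ref{sec:mults}, the action of $\mc L(W)$ on $W$ is operator composition. An injective $*$-homomorphism between $C^*$-algebras is isometric, so $M(A,B)$ is isometrically a direct sum of three $W^*$-algebras, hence $W^*$.

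The expected main obstacle is checking that the weak$^*$-topologies match on each corner. On $\hom(B,A)$ this holds by the choice of representation. On $z_AA$ we must compare the weak$^*$-topology of $A$ with that of $C''$. Both are $W^*$-algebras and $z_AA\to C''$ is a $*$-isomorphism, and by Sakai's uniqueness of predual any $*$-isomorphism of $W^*$-algebras is automatically weak$^*$-homeomorphic, which settles this corner. The $\hom(A,B)$ corner follows by applying the adjoint, which is weak$^*$-continuous in $L_w(W)$ and, being the inverse of the isometry in the other corner, corresponds to the given predual of $\hom(A,B)$. For this last point we use the category's predual together with uniqueness of the TRO predual. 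That uniqueness follows from \cite[Theorem~2.6]{EOR_InjectivityNuclearityOS} applied to $\hom(A,B)$ directly, combined with the observation that the adjoint map between $W^*$-TROs $W\to W^\sharp$ is weak$^*$-continuous. Finally, the inclusion maps into the direct sum are weak$^*$-homeomorphisms onto their ranges, since each is the inclusion of a summand or corner.
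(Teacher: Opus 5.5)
Your proposal is correct and follows essentially the same route as the paper: you realise $\hom(B,A)$ as a non-degenerate $W^*$-TRO via \cite[Theorem~2.6]{EOR_InjectivityNuclearityOS}, use the multiplier lemma to obtain central projections $z_A, z_B$, and identify $M(A,B)$ with $L_w(\hom(B,A))\oplus(1-z_A)A\oplus(1-z_B)B$, even supplying the check that $(1-z_A)\hom(B,A)=0$, which the paper leaves implicit. The only variation is in the $\hom(A,B)$ corner, where you use uniqueness of the TRO predual rather than the paper's main argument (swapping to $M(B,A)$ and applying Lemma~\ref{lem:iso_plus_ws_means_wsh}); the paper itself mentions your route as an alternative.
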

\begin{proof}
Let $V = \hom(B,A)$, which by Section~\ref{sec:cats_to_TROs} can be realised isometrically as a TRO.  As $V$ is a dual TRO, by \cite[Theorem~2.6]{EOR_InjectivityNuclearityOS} we can find an embedding $V \subseteq \mc B(K,H)$ such that $V$ becomes a $W^*$-TRO, the weak$^*$-topology being that given by $\hom(B,A)_*$.  By Appendix~\ref{sec:make_nondeg} we may suppose that $V$ acts non-degenerately.  Again set $C = \overline{V V^\sharp}, D=\overline{V^\sharp V}$ so we have the linking algebra
\[ L(V) = \begin{pmatrix} C & V \\ V^\sharp & D \end{pmatrix} \subseteq \mc B(H\oplus K). \]
By Section~\ref{sec:mults}, we know that $C'' = \overline{C}^\sigma = M(C)$ and similarly for $D$, and so
\[ L_w(V) = L(V)'' = \begin{pmatrix} \overline{C}^\sigma & V \\ V^\sharp & \overline{D}^\sigma \end{pmatrix}
= \begin{pmatrix} M(C) & V \\ V^\sharp & M(D) \end{pmatrix}. \]

Alternatively, as in Section~\ref{sec:cats_to_TROs}, we also have the $C^*$-algebra
\[ M(A,B) = \begin{pmatrix} A & \hom(B,A) \\ \hom(A,B) & B \end{pmatrix}
= \begin{pmatrix} A & V \\ V^\sharp & B \end{pmatrix}. \]
Furthermore, $A$ is a $W^*$-algebra, and $VV^\sharp = \hom(B,A)\circ\hom(A,B) \subseteq A$ so we can regard $C$ as a sub-$C^*$-algebra of $A$; here recall the discussion in Section~\ref{sec:mults} showing that $C$ and $D$ depend only on $V$, not the (possibly degenerate) realisation of $V$ as a space of operators between Hilbert spaces.

Furthermore, $C$ is an ideal in $A$, as for $x\in A, u,v\in V$ we have $x(uv^*) = (xu)v^* \in VV^\sharp \subseteq C$, because $xu \in V$.  So by density, $AC \subseteq C$, and similarly on the other side.  Apply the lemma to the inclusion $C \subseteq A$ to find a central projection $z\in A$ with $\overline{C}^\sigma = zM \cong M(C)$.  Apply exactly the same argument to $B$ and $D$ to find a central projection $z'$ in $B$ with $M(D) \cong z'B$.  We then have an isomorphism of $*$-algebras
\begin{align*}
M(A,B)
&\cong \begin{pmatrix} zA \oplus (1-z)A & V \\ V^\sharp & z'B \oplus (1-z')B \end{pmatrix}
\cong \begin{pmatrix} zA & V \\ V^\sharp & z'B \end{pmatrix}
   \oplus \begin{pmatrix} (1-z)A & 0 \\ 0 & (1-z')B \end{pmatrix} \\
&\cong \begin{pmatrix} M(C) & V \\ V^\sharp & M(D) \end{pmatrix}
   \oplus \begin{pmatrix} (1-z)A & 0 \\ 0 & (1-z')B \end{pmatrix}
\cong L_w(V) \oplus \begin{pmatrix} (1-z)A & 0 \\ 0 & (1-z')B \end{pmatrix}.
\end{align*}
The direct sum at the end is the direct sum of two $W^*$-algebras, and hence if given the $\max$ norm is itself a $W^*$-algebra.  So $M(A,B)$, a $C^*$-algebra, is $*$-isomorphic to a $W^*$-algebra, and is hence a $W^*$-algebra itself.

The map $A \to M(A,B)$ factors through the map $A \to zA \oplus (1-z)A$ which is a weak$^*$-continuous isomorphism, and similarly for $B\to M(A,B)$.  By construction, the map $V \to M(A,B)$ factors through $V\to L_w(V)$ which is weak$^*$-weak$^*$-continuous.

Now swap the roles of $A$ and $B$, and consider $M(B,A)$ and, say, $U = \hom(A,B) = V^\sharp$.  The same reasoning shows that $U \to M(B,A)$ is weak$^*$-weak$^*$-continuous.  The map
\[ M(A,B) \to M(B,A); \quad \begin{pmatrix} a & u \\ v^* & b \end{pmatrix}
\mapsto \begin{pmatrix} b & v^* \\ u & a \end{pmatrix}, \]
is a $*$-homomorphism and a weak$^*$-weak$^*$-homeomorphism (given the (unique) $W^*$-structure we constructed on $M(A,B)$).  Hence we see that also $\hom(A,B) \to M(A,B)$ is weak$^*$-weak$^*$-continuous.  The proof is now complete following an application of Lemma~\ref{lem:iso_plus_ws_means_wsh} below (or one could use that \cite[Theorem~2.6]{EOR_InjectivityNuclearityOS} tells us that $V$ (and $V^\sharp$) has a unique predual).
\end{proof}


\begin{lemma}\label{lem:iso_plus_ws_means_wsh}
Let $E,F$ be Banach spaces, and let $T \colon F^* \to E^*$ be weak$^*$-continuous and bounded below.  Then $T$ is a weak$^*$-weak$^*$-homeomorphism onto its range.  
\end{lemma}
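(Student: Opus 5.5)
Since $T$ is weak$^*$-continuous, it is the adjoint of a bounded operator $S\colon E\to F$ with $T = S^*$. The plan is to use the standard Banach space fact that $S^*$ is bounded below exactly when $S$ is surjective. Surjectivity of $S$ then gives a weak$^*$-homeomorphism of $F^*$ onto the annihilator-type subspace $T(F^*) = (\ker S)^\perp \subseteq E^*$. From this we can read off that $T^{-1}$ is weak$^*$-continuous on the range.

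Concretely, the steps are as follows.

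\textbf{Step 1.} Weak$^*$-continuity of $T$ means that $\mu\circ T$ is weak$^*$-continuous on $F^*$ for each $x\in E$, viewed as a functional on $E^*$. Hence $\mu\circ T$ is given by a unique element $S(x)\in F$. The map $S$ is linear and bounded with $\|S\|=\|T\|$, and $S^*=T$.

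\textbf{Step 2.} Since $S^*$ is bounded below, say $\|S^*\phi\|\ge c\|\phi\|$, the closed range theorem shows that $S$ has closed range. It also shows that $S$ is surjective, because $S^*$ is injective and so the range of $S$ is dense. Alternatively, one shows directly that $S$ maps the unit ball of $E$ onto a set whose closure contains $c$ times the unit ball of $F$, via Hahn--Banach, and then applies the standard open mapping iteration.

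\textbf{Step 3.} By the open mapping theorem, $S$ induces an isomorphism $\tilde S\colon E/\ker S\to F$. Dualising gives an isomorphism $\tilde S^*\colon F^*\to (E/\ker S)^* = (\ker S)^\perp$, and this is exactly $T$ with its codomain restricted. The inverse $(\tilde S^*)^{-1} = (\tilde S^{-1})^*$ is the adjoint of a bounded map. It is therefore weak$^*$-continuous from $(\ker S)^\perp$, with the weak$^*$-topology $\sigma((\ker S)^\perp, E/\ker S)$, to $F^*$.

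\textbf{Step 4.} The final check is that $\sigma((\ker S)^\perp, E/\ker S)$ agrees with the relative weak$^*$-topology from $E^*$. This is immediate, since pairing $\phi\in(\ker S)^\perp$ with $x+\ker S$ is the same as pairing it with $x$. Hence $T^{-1}$ is weak$^*$-continuous on the range, and $T$ is a weak$^*$-homeomorphism onto its range.

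The main obstacle is Step~2, deducing surjectivity of $S$ from $S^*$ being bounded below. I would invoke the closed range theorem directly, as is standard (for example \cite[Theorem~4.15]{RudinFA}-type results). If that is to be avoided, the direct argument is the following. If $S$ were not onto, the closure of $S(\mathrm{Ball}\,E)$ would miss some $y$ with $\|y\|<c$, and Hahn--Banach separation would give $\phi$ with $\|S^*\phi\|\le 1<\phi(y)\le c\|\phi\|$, a contradiction. The usual successive-approximation argument then upgrades this to surjectivity.
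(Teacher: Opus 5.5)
Your proof is correct and is essentially the paper's argument. Both pass to the preadjoint $S$, note that $\ker S={}^\circ T(F^*)=:E_0$, identify the range of $T$ with $E_0^\circ\cong(E/E_0)^*$, and use the closed range theorem to show that the induced map $E/E_0\to F$ is an isomorphism whose adjoint is $T$ with its codomain restricted. The only difference is the order: you prove surjectivity of $S$ first, so weak$^*$-closedness of the range comes for free, whereas the paper first shows the range is weak$^*$-closed and then that the preadjoint of the corestriction is an isomorphism. (A small slip: in Step~1, $\mu\circ T$ should read $x\circ T$, with $x\in E$ viewed as a functional on $E^*$.)
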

\begin{proof}
This is standard Banach space theory, though we do not know a precise reference, so we give a sketch proof, using results which, for example, can be found in \cite[Section~3.1]{MegginsonBook}.  As $T$ is weak$^*$-continuous, there is $T_* \in \mc B(E,F)$ with $T = (T_*)^*$, see \cite[Theorem~3.1.11]{MegginsonBook}.  As $T$ is bounded below, it has norm-closed range, and so also has weak$^*$-closed range, \cite[Theorem~3.1.21]{MegginsonBook}.  Setting $E_0 = {}^\circ T(F^*) = \{ x\in E : \ip{T(\mu)}{x}=0 \ (\mu\in F^*) \}$ we have that $T(F^*) = \overline{T(F^*)}^{w^*} = ({}^\circ T(F^*))^\circ = E_0^\circ = \{ \lambda\in E^* : \ip{\lambda}{x}=0 \ (x\in E_0) \}$.  By Hahn--Banach, we identify $(E/E_0)^*$ with $E_0^\circ$ isometrically.  The $\sigma(E_0^\circ, E/E_0)$ topology agrees with the restriction of the $\sigma(E^*,E)$ topology to $E_0^\circ$.  Let $S \colon F^* \to (E/E_0)^*$ be the corestriction of $T$, so $S$ is weak$^*$-continuous, and so again there is $S_* \colon E/E_0 \to F$ with $S = (S_*)^*$.  As $S$ is bounded below and surjective, it is an isomorphism, and so also $S_*$ is an isomorphism, \cite[Theorem~3.1.18]{MegginsonBook}.  Hence $S$ is a weak$^*$-weak$^*$-homeomorphism, and so also $T$ is a weak$^*$-weak$^*$-homeomorphism onto its range.
\end{proof}

The module structure of the hom spaces extends to (pre)duals.  For example, given $\mu\in B^*$ and $x\in\hom(B,A)$ by $\mu\cdot x$ we mean the (bounded) functional $\hom(A,B)\to\mathbb C; y\mapsto \mu(yx)$.  Given $\mu\in\hom(A,B)^*$ let $\mu^*$ be the functional $\hom(B,A) \to \mathbb C; x\mapsto \overline{\mu(x^*)}$, so $\mu^* \in \hom(B,A)^*$ and $(\mu^*)^* = \mu$.

The following proposition is a little strange, because we now know that condition \ref{prop:via_bimod:one} holds always!  It tells us then that conditions \ref{prop:via_bimod:two} and \ref{prop:via_bimod:three} always hold, so there is a tight connection between the preduals of the various hom spaces.  That \ref{prop:via_bimod:two}$\implies$\ref{prop:via_bimod:one} and/or \ref{prop:via_bimod:three}$\implies$\ref{prop:via_bimod:one} shows a much more elementary way to prove Theorem~\ref{thm:main}, \emph{supposing} we had some independent to show that \ref{prop:via_bimod:two} or \ref{prop:via_bimod:three} holds: but this seems out of reach!  The following proof also gives an alternative way to construct a normal functor $\mc A \to \catHilb$, by-passing the topological considerations of \cite[Proposition~2.13]{GLR_Wstar_categories}.

\begin{proposition}\label{prop:via_bimod}
Let $\mc A$ be a $W^*$-category.  The following are equivalent:
\begin{enumerate}[(a)]
  \item\label{prop:via_bimod:one}
  $M(A,B)$ is a $W^*$-algebra;
  \item\label{prop:via_bimod:two}
  for each $A,B\in\mc A, \omega\in A_*$ and $x\in\hom(B,A)$ we have that $\omega\cdot x \in \hom(A,B)_*$, and for $\omega\in\hom(A,B)_*$ we have that $\omega^* \in \hom(B,A)_*$;
  \item\label{prop:via_bimod:three}
  for each $A,B\in\mc A$, with $A_1, A_2 \in \{A,B\}$, with $x\in \hom(A, A_1), y\in\hom(A_2, A), \omega \in A_*$ we have that $x\cdot\omega\cdot y \in \hom(A_1, A_2)_*$.
\end{enumerate}
\end{proposition}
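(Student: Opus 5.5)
I would prove the cycle (a)$\Rightarrow$(c)$\Rightarrow$(b)$\Rightarrow$(a). Here (a) is understood for all $A,B$, with the $W^*$-structure on $M(A,B)$ compatible with the given preduals, as in Theorem~\ref{thm:main}.

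\emph{(a)$\Rightarrow$(c).} Assume $M(A,B)$ is $W^*$ and the corner inclusions are weak$^*$-homeomorphisms onto their ranges. Then the predual of each corner is the quotient of $M(A,B)_*$ obtained by restricting functionals to that corner. Given $\omega\in A_*$, I extend it to a normal functional $\tilde\omega$ on $M(A,B)$ by composing with the normal corner projection $m\mapsto e m e$, where $e=\smattwo{1}{0}{0}{0}$ (Remark~\ref{rem:contractive_proj}). Multiplication in a $W^*$-algebra is separately weak$^*$-continuous, so $x\cdot\tilde\omega\cdot y$ is normal on $M(A,B)$. Its restriction to the corner $\hom(A_1,A_2)$ is $x\cdot\omega\cdot y$, so this lies in $\hom(A_1,A_2)_*$.

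\emph{(c)$\Rightarrow$(b).} The first half of (b) is the case of (c) with $x$ an identity. For the second half, given $\omega\in\hom(A,B)_*$, I first reduce to functionals of the form $x\cdot\omega_0\cdot y$ with $\omega_0\in A_*$ or $B_*$. For those, $(x\cdot\omega_0\cdot y)^*=y^*\cdot\omega_0^*\cdot x^*$, and $\omega_0^*\in A_*$ because the adjoint is weak$^*$-continuous on a $W^*$-algebra; so (c) puts this adjoint in $\hom(B,A)_*$.

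To justify the reduction I show that such functionals span a norm-dense subspace of $\hom(A,B)_*$. If $z\in\hom(A,B)=(\hom(A,B)_*)^*$ annihilates all of them, then $\omega_0(yzx)=0$ for all $x,y,\omega_0$. Taking $y=z^*$ and $x=1$ gives $z^*z=0$, hence $z=0$. By Hahn--Banach the span is weak$^*$-dense in $\hom(A,B)_*$, viewed inside the dual $\hom(A,B)_*^{**}$. Since the span already lies in $\hom(A,B)_*$, this weak$^*$-density gives norm-density, the weak topology on the predual being the restriction. The map $\omega\mapsto\omega^*$ is isometric, so norm-closedness of $\hom(B,A)_*$ in $\hom(B,A)^*$ finishes (b).

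\emph{(b)$\Rightarrow$(a).} This is the main step. I would build a separating family of normal representations and then apply Lemma~\ref{lem:iso_plus_ws_means_wsh}. Fix $\omega\in A_*^+$ and consider the GNS-type space for the object $A$: complete $\bigoplus_{C}\hom(A,C)$ under $\ip{\xi}{\eta}=\omega(\xi^*\eta)$. Every object then acts by composition, giving a $*$-functor $\mc A\to\catHilb$.

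I would then check that each vector functional $z\mapsto\omega(\xi^* z\eta)$ on $\hom(C,C')$ is normal. By (b), $\omega\cdot\xi^*$ is normal on the appropriate hom space, and combining this with the adjoint condition gives two-sided normality of $z\mapsto\omega(\xi^*z\eta)$. This is exactly where both halves of (b) are used, and I expect it to be the delicate bookkeeping point: showing normality in $z$ jointly, not just for fixed $\eta$. Composition with $\eta$ is weak$^*$-continuous on a $W^*$-category hom by (b) applied transposed, since $\omega\cdot\xi^*$ is normal and $z\mapsto z\eta$ is its dual action.

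Taking the direct sum over all normal states $\omega$ of all $A$ gives a faithful normal $*$-functor. Restricting it to the objects $A,B$ yields a $*$-representation of $M(A,B)$ on $H\oplus K$ whose corners are weak$^*$-continuous and isometric. Each corner is then weak$^*$-closed by Lemma~\ref{lem:iso_plus_ws_means_wsh}, so the image of $M(A,B)$ is weak$^*$-closed, hence a von Neumann algebra. This shows $M(A,B)$ is $W^*$.
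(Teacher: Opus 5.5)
Your (a)$\Rightarrow$(c) is the paper's argument, and your density argument for (c)$\Rightarrow$(b) is correct. That argument is not in the paper, which instead gets (c)$\Rightarrow$(a) for free, because (c) is literally the normality of the GNS coefficient functionals on the full subcategory on $A,B$. By routing (c) through (b), you make everything depend on (b)$\Rightarrow$(a), and there the one non-formal step is not established.

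You need $\hom(C,C')\ni z\mapsto\omega(\xi^*z\eta)$ to be normal for $\xi\in\hom(A,C')$ and $\eta\in\hom(A,C)$. Read literally, (b) only gives weak$^*$-continuity of compositions landing in an endomorphism algebra: $\hom(A,B)\ni y\mapsto xy\in A$ for $x\in\hom(B,A)$, and, via adjoints, $\hom(B,A)\ni y\mapsto yx^*\in A$. Your justification (``composition with $\eta$ is weak$^*$-continuous \dots\ since $\omega\cdot\xi^*$ is normal and $z\mapsto z\eta$ is its dual action'') is a non sequitur. Normality of $\omega\cdot\xi^*$ on $\hom(A,C')$ is what you want to \emph{combine} with continuity of $z\mapsto z\eta$ from $\hom(C,C')$ to $\hom(A,C')$; it says nothing about that continuity. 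The adjoint condition only turns $\omega\cdot\xi^*$ into the normal functional $(\omega\cdot\xi^*)^*$ on $\hom(C',A)$. Already for $C=C'=B$, the map $B\ni z\mapsto\omega(\xi^*z\eta)$ is not an instance of (b).

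The paper makes its chain explicit: in its notation, $x\cdot\omega\cdot y^*=(\mu^*\cdot x^*)^*$ with $\mu=\omega\cdot y^*$. But its middle step applies the first half of (b) to $\mu^*\in\hom(C,A)_*$ rather than to an element of $A_*$, so it tacitly uses the same right-composition continuity.

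Here is one way to get that continuity from (b) as stated.
Let $(z_i)$ be a bounded net in $\hom(C,C')$ with $z_i\to z$ weak$^*$, and let $w$ be a weak$^*$-cluster point of $(z_i\eta)$ in $\hom(A,C')$.
For $u\in\hom(C',A)$, consider the maps $\hom(C,C')\ni t\mapsto t\eta u=(u^*\eta^*t^*)^*\in C'$ and $\hom(A,C')\ni t\mapsto tu=(u^*t^*)^*\in C'$. Both are weak$^*$-continuous: use the second half of (b) for $t\mapsto t^*$, the first half of (b) with $(A,B)$ replaced by $(C',C)$ and by $(C',A)$, and normality of the adjoint on $C'$.
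Hence $(w-z\eta)u=0$ for every $u$, and taking $u=(w-z\eta)^*$ gives $w=z\eta$.
So $z_i\eta\to z\eta$, and Krein--Smulian makes $z\mapsto z\eta$ weak$^*$-continuous. Therefore $z\mapsto(\omega\cdot\xi^*)(z\eta)$ is normal.

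With this supplied, your (b)$\Rightarrow$(a) goes through, and by Lemma~\ref{lem:iso_plus_ws_means_wsh} it also yields the predual compatibility you built into (a). Two smaller points remain. You still need the routine reduction, done in the paper, from coefficient functionals on a dense set of vectors to bounded nets. You should also sum only over normal states on $A$ and $B$, not over all objects, to avoid class-sized direct sums.
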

\begin{proof}
Suppose \ref{prop:via_bimod:one} holds.  As $M(A,B)$ is a $W^*$-algebra, the structure of the projection considered in Remark~\ref{rem:contractive_proj}, of $M(A,B)$ onto $A$, is weak$^*$-weak$^*$-continuous, and so $A_*$ can be identified with a contractively complemented subspace of $M(A,B)_*$.  Thus, for $\omega\in A_*$ there is $\omega' \in M(A,B)_*$ with $\omega(a) = \omega'(\smattwo{a}{0}{0}{0})$ for each $a\in A$.  Then, for $x\in \hom(B,A)$ we see that
\[ \omega\cdot x \colon \hom(A,B) \ni y \mapsto \omega(xy)
= \omega'\bigl( \smattwo{0}{x}{0}{0} \smattwo{0}{0}{y}{0}  \bigr)
= \mu \bigl(\smattwo{0}{0}{y}{0} \bigr), \]
say, where $\mu\in M(A,B)_*$.  As the inclusion $\hom(A,B) \to M(A,B)$ is weak$^*$-continuous it follows that $\omega\cdot x$ is as well, as required.  Similarly $\omega\mapsto\omega^*$ is weak$^*$-continuous, showing \ref{prop:via_bimod:two}.  That \ref{prop:via_bimod:one} implies \ref{prop:via_bimod:three} is similar.

To show the converse, our aim is to perform a GNS construction, along the lines of \cite[Proposition~1.8]{GLR_Wstar_categories}.  We follow the detailed construction in \cite[Section~6]{mitchener_cstar_cats}.  Let $A\in\mc A$ and $\omega\in A^*$ be a state.  Given $B\in\mc A$ we let $H_B$ be the separation completion of $\hom(A,B)$ for the pre-inner-product given by $\ip{x}{y} = \omega(x^*y)$.  Denote by $[x]$ the image of $x\in\hom(A,B)$ in $H_B$.  Then for $C\in\mc A$ and $y\in\hom(B,C)$ we have that $\| [yx] \|_{H_C} \leq \|y\|_C \|[x]\|_{H_B}$.  Recalling that $\catHilb$ is the $W^*$-category of Hilbert spaces and bounded linear maps, we obtain a $*$-functor $F' \colon \mc A \to \catHilb$ with $F'(B) = H_B$ and $F'(y)$ the continuous extension of the map $[x] \mapsto [yx]$.

We claim that if $\omega$ is normal on $A$, then $F'$ is weak$^*$-continuous on hom spaces.  We wish to show that $F' \colon \hom(B,C) \to \hom(F(B),F(C)) = \mc B(H_B, H_C)$ is weak$^*$-continuous.  To do this, it suffices to show that if $(x_i)$ is a \emph{bounded} net in $\hom(B,C)$ converging weak$^*$ to $0$, then $F'(x_i) \to 0$ weak$^*$ in $\mc B(H_B, H_C)$.  We are not aware of a canonical reference for this claim, but it's shown by elementary means in \cite[Section~10]{Daws_multipliers}, for example.  As $(F'(x_i))$ is a bounded net, it suffices to check that $(\xi|F'(x_i)\eta) \to 0$ for each $\xi\in H_C, \eta\in H_B$.  As $\{ [x] : x\in\hom(A,B)\}$ is dense in $H_B$, and so forth, it further suffices to check that $([y]|F'(x_i)[x]) \to 0$ for each $x\in\hom(A,B), y\in\hom(A,C)$.  However, $([y]|F'(x_i)[x]) = \omega(y^* x_i x) = (x\cdot\omega\cdot y^*)(x_i)$ and so it suffices (and is necessary) to show that $x\cdot\omega\cdot y^* \in \hom(B,C)_*$.

When \ref{prop:via_bimod:two} holds, we have that $\mu = \omega\cdot y^* \in \hom(A,C)_*$.  Further, $\mu^* \in \hom(C,A)_*$ and so $\mu^* \cdot x^* \in \hom(C,B)_*$, but then for $z\in\hom(B,C)$ we have
\[ (\mu^*\cdot x^*)^*(z)
= \overline{ (\mu^*\cdot x^*)(z^*) }
= \overline{ \mu^*(x^*z^*) }
= \mu(zx)
= \omega(y^*zx)
= (x\cdot\omega\cdot y^*)(z), \]
and so $x\cdot\omega\cdot y^* = (\mu^*\cdot x^*)^* \in \hom(B,C)_*$ as required.  So $F'$ is weak$^*$-continuous on hom spaces.  We now take the direct sum over sufficiently many normal states $\omega$ to ensure that $F' \colon A \to \mc B(H_A)$ is faithful.  Then $F'$ induces a $*$-homomorphism $M(A,B) \to \mc B(H_A \oplus H_B)$ which is faithful (hence isometric) on the $A$ component, and which is weak$^*$-continuous on every component.  Swapping the roles of $A$ and $B$ we obtain a $*$-representation of $M(A,B)$ which is faithful on the $B$ component, and weak$^*$-continuous on each component.  Finally, taking the direct sum of these representations gives a $*$-representation, $\pi$ say, of $M(A,B)$  which is faithful (hence isometric) on both $A$ and $B$, and hence also isometric on the $\hom(A,B)$ and $\hom(B,A)$ components, compare Remark~\ref{rem:coms_iso}.  As $M(A,B)$ is \emph{isomorphic} to a dual space (the direct sum of the preduals of the components) by Lemma~\ref{lem:iso_plus_ws_means_wsh} we conclude that $\pi$ has weak$^*$-closed range.  So the image of $\pi$ is a von Neumann algebra, but $\pi$ is faithful, so $M(A,B)$ is a $W^*$-algebra.

When alternatively \ref{prop:via_bimod:three} holds, we note that we do not need to define $F'$ on all of $\mc A$, but only on the full subcategory generated by $A,B$, and then \ref{prop:via_bimod:three} immediately gives the required condition.
\end{proof}

\begin{remark}\label{rem:alt_bimod_conditions}
An alternative way to phrase the first part of condition \ref{prop:via_bimod:two} is that the composition map $\hom(B,A) \times \hom(A,B) \to \hom(A,A)=A$ is weak$^*$-continuous in the 2nd variable.  Similarly, \ref{prop:via_bimod:three} is equivalent to this composition map being weak$^*$-continuous in both variables, and additionally that $\hom(B,A) \circ B \circ \hom(A,B) \to A$ is weak$^*$-continuous in the middle variable.

In Section~\ref{sec:dbas} we give an alternative ``module'' structure, though the proof is not self-contained in the same way the previous result is.
\end{remark}

\section{Links with self-dual modules}\label{sec:self-dual}

We return to Section~\ref{sec:cats_to_TROs}, and think of $\hom(A,B)$ as a Hilbert $C^*$-module over $A$.  As remarked at the end of Section~2 of \cite{GLR_Wstar_categories} there is a link with ``self-dual modules'' here.  A Hilbert $C^*$-module $E$ over a $C^*$-algebra $A$ is \emph{self-dual} if every bounded module map $t \colon E \to A$ (so $t(x\cdot a) = t(x)a$ for $x\in E, a\in A$) is of the form $t(x) = (t_0|x)$ for some $t_0\in E$; see \cite{paschke_inner_prod_mods} for example.  It is shown in \cite[Proposition~3.8]{paschke_inner_prod_mods} that when $E$ is a self-dual module over a $W^*$-algebra we have that $E$ is isometrically a dual space.  Furthermore, then $\mc L(E)$ is a $W^*$-algebra, \cite[Proposition~3.10]{paschke_inner_prod_mods}, and $E$ admits a polar-decomposition theorem, \cite[Proposition~3.11]{paschke_inner_prod_mods}.  We see the analogy with the spaces $\hom(A,B)$; indeed, \cite[Proposition~2.14]{GLR_Wstar_categories} shows (in particular) that when $\mc A$ is a $W^*$-category, $\hom(A,B)$ is a self-dual Hilbert $C^*$-module over $A$.

\begin{proposition}\label{prop:mod_is_hom_space}
Let $A$ be a $C^*$-algebra and let $E$ be a Hilbert $C^*$-module over $A$.  Then there is a $C^*$-category $\mc A$ with $A$ as an object, and with $B\in\mc A$ with $E = \hom(A,B)$ as a Hilbert $C^*$-module.  When $A$ is a $W^*$-algebra and $E$ is self-dual, we may take $\mc A$ to be a $W^*$-category.
\end{proposition}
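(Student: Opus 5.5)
We construct $\mc A$ as the full subcategory on two objects of the linking category of $E$. Set $B = \mc L(E)$ in the first instance, though it is better to use $B = \mc K(E)$ for the $C^*$-case, and $\mc L(E)$ for the $W^*$-case. Define $\hom(A,A) = A$, $\hom(A,B) = E$, $\hom(B,A) = E^\sharp$ (the conjugate space $\{x^* : x\in E\}$), and $\hom(B,B) = \mc K(E)$ or $\mc L(E)$. Composition is given by the module action $x\circ a = x\cdot a$, by $x\circ y^* = \theta_{x,y}$, by $y^*\circ x = (y|x)$, and by $T\circ x = T(x)$. All of these are the compositions in the standard linking algebra
\[ L(E) = \begin{pmatrix} \mc K(E) & E \\ E^\sharp & A \end{pmatrix}, \]
up to reordering the corners, so the $C^*$-algebra structure of $L(E)$ (see \cite[Chapter~2]{RaeburnWilliams_MoritaEquivBook}) will give associativity. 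It will also give contractivity of composition, the $C^*$-identity $\|x\|^2 = \|x^*x\|$, and positivity of $x^*x$ in each endomorphism algebra. In particular, for $x\in E$ we have $x^*x = (x|x)\geq 0$ in $A$, and $xx^* = \theta_{x,x}\geq 0$. The Hilbert module structure on $\hom(A,B)$ is $(x|y) = x^*\circ y = (x|y)$, which recovers $E$. If $A$ is non-unital we may still do this, or we can add identities formally; the paper's convention of unital $A$ is only needed for the $W^*$ part. For the $C^*$ claim I would simply verify the axioms by embedding $L(E)$ faithfully in some $\mc B(H)$. Then $\mc A$ is a subcategory of $\catHilb$ and all axioms are inherited.

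For the $W^*$ claim, take $\hom(B,B) = \mc L(E)$ instead, and consider
\[ \begin{pmatrix} \mc L(E) & E \\ E^\sharp & A \end{pmatrix}. \]
This is still a $*$-algebra, because $\mc L(E)$ acts on $E$, and it embeds in $\mc L(E\oplus A)$, which gives a $C^*$-category. We need each hom space to be a dual Banach space. By \cite[Proposition~3.10]{paschke_inner_prod_mods}, $\mc L(E)$ is a $W^*$-algebra, and $A$ is a $W^*$-algebra by hypothesis. By \cite[Proposition~3.8]{paschke_inner_prod_mods}, $E$ is isometrically a dual space. The space $E^\sharp$ is isometrically anti-linearly isomorphic to $E$, so it is a dual space with the conjugated predual, via $\mu^*(x^*) = \overline{\mu(x)}$. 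All hom spaces are therefore isometric duals, and $\mc A$ is a $W^*$-category by definition.

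The main obstacle is making sure nothing more is needed than isometric duality of each hom space. That is exactly the definition of a $W^*$-category used here, so no compatibility between preduals is required, and Theorem~\ref{thm:main} then supplies it automatically. The remaining care point is verifying the $C^*$-norm on $\hom(B,B) = \mc L(E)$ together with $\|x\|^2 = \|xx^*\|$ for $x\in E^\sharp$, that is, $\|\theta_{x,x}\| = \|x\|^2$. This is a standard fact about rank-one operators, or it follows from the $C^*$-identity in $\mc L(E\oplus A)$ applied to the off-diagonal element. We obtain positivity in each corner because the corner embeds as a hereditary corner $p\,\mc L(E\oplus A)\,p$.
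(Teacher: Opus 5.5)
Your proposal is correct and uses essentially the same construction as the paper: two objects with $\hom(A,B)=E$, $\hom(B,A)$ the conjugate module, and $\hom(B,B)=\mc K(E)$ in general or $\mc L(E)$ in the $W^*$ case, with Paschke's Propositions~3.8 and~3.10 supplying the preduals. The one difference is how you verify the $C^*$-category axioms: the paper checks the bimodule compatibilities by hand and cites Paschke's positivity criterion and \cite{BMS_quasimults} for $\theta_{x,x}\geq 0$ and $\|\theta_{x,x}\|=\|x\|^2$, whereas you get associativity, positivity and the norm identity at once from the linking algebra inside $\mc L(E\oplus A)$, where $\theta_{x,x}=zz^*$ for the off-diagonal element $z$. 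This slightly slicker route also settles the positivity point the paper flags as not obvious.
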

\begin{proof}
Simply set $B = \mc K(E)$ the compact operators, let $\mc A$ have two objects, $A$ and $B$, set $\hom(A,B) = E$ and $\hom(B,A) = \overline{E}$ the conjugate space to $E$, say $\overline E = \{ x^* : x\in E \}$.  Then $\overline E$ is a Hilbert $C^*$-module over $B$ via $(x^*|y^*) = \theta_{x,y} \in \mc K(E)$ (compare here \cite[Example~2.11]{RaeburnWilliams_MoritaEquivBook}).  We remark that it is not obvious to us that $\theta_{x,x}$ is positive in the $C^*$-algebra $\mc K(E)$: one way to see this is to use \cite[Lemma~4.1]{Lance_HilbModsBook} (a result attributed to Paschke) that $t\in\mc L(E)$ is positive if and only if $(y|t(y)) \geq 0$ for $y\in E$, and then to note that $(y|\theta_{x,x}(y)) = (x|y)^*(x|y) \geq 0$ for each $y$.  It is more routine to show that the operator norm of $\theta_{x,x}$ agrees with $\|x\| = \|(x|x)\|^{1/2}$, see \cite[Proposition~1.7(ii)]{BMS_quasimults} for example.

Clearly $\hom(A,B)$ is a $B$-$A$-bimodule.  We turn $\hom(B,A)$ into an $A$-$B$-module by $a\cdot x^* = (x\cdot a^*)^*$ and $x^* \cdot \theta = (\theta^*(x))^*$ for $a\in A, x\in E, \theta\in\mc K(E)$.  This is a bimodule structure as $(a\cdot x^*)\cdot \theta = \theta^*(x\cdot a^*)^* = (\theta^*(x)\cdot a^*)^* = a\cdot (x^*\cdot\theta)$ as $\theta^*$ is an $A$-module map.  Then $* \colon \hom(A,B) \to \hom(B,A)$ behaves as required: the product satisfies $(xa)^* = a^*x$ and so forth.

We next check that $\hom(A,B) \times \hom(B,A) \to B$ is a $B$-bimodule map.  This map is $(x,y^*) \mapsto \theta_{x,y}$ which is a $B=\mc K(E)$-module map.  It is also compatible with the $A$ action, for the resulting product satisfies $(xa)y^* = \theta_{xa,y} = \theta_{x,y a^*} = x(ay^*)$.  Similarly $\hom(B,A) \times \hom(A,B) \to A$ is $(x^*,y) \mapsto (x|y)$ is an $A$-bimodule map, and is compatible with the $B=\mc K(E)$ action because $(\theta(x)|y) = (x|\theta^*(y))$ for each $\theta\in B$.  We have hence defined an associative product on all the hom spaces.  The remaining axioms for a $C^*$-category are easy to check.

To finish to proof, we first note that it is not necessary to take $B = \mc K(E)$: any $C^*$-algebra $B$ with $\mc K(E) \subseteq B \subseteq \mc L(E)$ would work equally well.  Now suppose that $A$ is a $W^*$-algebra and $E$ is self-dual.  Then $E$ is isometrically a dual space, as is $\overline E$, and $\mc L(E)$ is a $W^*$-algebra.  So we simply take $B = \mc L(E)$ and then every hom space in $\mc A$ has a predual, so $\mc A$ is a $W^*$-category.
\end{proof}

There is also a link with isometric preduals here, made explicit by \cite[Corollary~3.5]{BM_Duality_OpAlgI}.  Indeed, let $A$ be a $C^*$-algebra, $E$ a Hilbert $C^*$-module over $A$, and suppose that $E$ is isometrically a dual space.  Let $C = \overline\lin\{ (x|y) : x,y\in E\}$ so $C$ is an ideal in $A$, and $E$ is a full module over $C$.  Then \cite[Corollary~3.5]{BM_Duality_OpAlgI} shows (amongst other things) that $M(C)$ is a $W^*$-algebra, that $E$ is self-dual over $M(C)$, and that the predual of $E$ is unique.  We remark that, using that $C$ is an ideal in both $A$ and $M(C)$, one can show that $E$ is also self-dual over $C$, and over $A$, compare \cite[Lemma 8.5.2]{BlecherLeMerdy_Book}.

\section{DBAs, \texorpdfstring{$C^*$}{Cstar}-algebras and non-isometric preduals}\label{sec:dbas}

In this section, we look at weakening the ``isometric'' condition, by considering preduals which are merely isomorphic, but with an additional algebraic property.  To be precise, a \emph{dual Banach algebra} is a Banach algebra $A$, together with a Banach space $E$ and an isomorphism (maybe not isometric) $\phi \colon A \to E^*$ such that, if we use $\phi$ to give $E^*$ a bounded algebra product, then the product is separately weak$^*$-continuous.  By \cite[Proposition~2.1]{daws_dba} we can, and shall, renorm $E$ so that $B = E^*$ becomes a Banach algebra (that is, the product is contractive), and as observed in \cite[Lemma~2.2]{daws_dba}, that the product is separately weak$^*$-continuous is equivalent to $\kappa_E(E) \subseteq E^{**} = B^*$ being a $B$-submodule, where here $\kappa_E \colon E \to E^{**}$ is the canonical map from a Banach space to its bidual.  Equivalently, $\phi^*\kappa_E(E) \subseteq A^*$ is an $A$-bimodule.  We use the terminology of \cite{Runde_DBA}, but the idea is older, see for example \cite{palmer_arens_mult_Wstar}.

The following is not immediate, given that with the additional property that $E_*$ is a submodule, it is false (see the counter-example explained in the introduction!)  This result improved Palmer's old result \cite[Theorem~2]{palmer_arens_mult_Wstar} in that we make no assumption about how the adjoint on $A$ and the predual $E_*$ interact.  It was first shown by  Pham who gave a more topological proof; for this reason, and to be self-contained, we give a simpler ``projection'' proof (which itself can be compared to the proof of \cite[Proposition~2.8]{pham_vn_algs}).

\begin{theorem}[{\cite[Corollary~2.6]{pham_vn_algs}}]\label{thm:cstar_dba}
Let $A$ be a $C^*$-algebra which is also a dual Banach algebra, say with predual $E_*$.  Then $A$ is a $W^*$-algebra and $E_*$ is isomorphic to the canonical (isometric) predual $A_*$.
\end{theorem}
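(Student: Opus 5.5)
We aim to realise $A$ faithfully as a weak$^*$-closed $*$-subalgebra of some $\mc B(H)$, mimicking the proof of Proposition~\ref{prop:via_bimod}. Write $A = E^*$ with $E = E_*$ (renormed so the product is contractive), so that $\kappa_E(E) \subseteq A^*$ is an $A$-bimodule. The plan is to produce enough states $\omega$ on $A$ lying in $E$ (that is, weak$^*$-continuous for $\sigma(A,E)$) whose GNS representations are weak$^*$-continuous. We then take a direct sum to get a faithful $*$-representation $\pi\colon A\to\mc B(H)$ which is weak$^*$-continuous and isometric, and apply Lemma~\ref{lem:iso_plus_ws_means_wsh} to conclude that $\pi(A)$ is weak$^*$-closed, so $A$ is a von Neumann algebra. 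Lemma~\ref{lem:iso_plus_ws_means_wsh} also shows that $\pi$ is a weak$^*$-homeomorphism for $\sigma(A,E)$ and $\sigma(A,A_*)$. Hence these topologies agree on $A$, and so $E$ is isomorphic to $A_*$, since both are the spaces of functionals continuous for the same weak$^*$-topology, with norms equivalent via the bounded-below map $\pi$.

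For the GNS step, let $\omega\in E$ be a positive functional. The GNS representation satisfies $(\Lambda(b)|\pi_\omega(a)\Lambda(c)) = \omega(b^*ac) = (c\cdot\omega\cdot b^*)(a)$. Because $E$ is a bimodule, $c\cdot\omega\cdot b^*\in E$. Bounded nets suffice, exactly as in the proof of Proposition~\ref{prop:via_bimod}, so $\pi_\omega$ is $\sigma(A,E)$-weak$^*$ continuous. The bimodule property is precisely what replaces condition~\ref{prop:via_bimod:three} there, and no interaction between the adjoint and $E$ is needed at this step.

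The main obstacle is showing that $E$ contains enough positive functionals to separate points of $A$. We are not given that $E$ is closed under $\omega\mapsto\omega^*$, so the real and positive parts of elements of $E$ need not lie in $E$. My first attempt is the ``projection'' idea suggested by the text. Let $P\colon A^{**}\to A$ be the adjoint of $\kappa_E$ (identifying $A=E^*$), a bounded idempotent onto $A$. Using the bimodule property, as in the proof of Proposition~\ref{prop:Vr_vnalg}, $P$ is an $A$-bimodule map for the Arens product on the von Neumann algebra $A^{**}$. Then $\ker P = \kappa_E(E)^\circ$ is a weak$^*$-closed two-sided ideal of $A^{**}$, hence of the form $(1-z)A^{**}$ for a central projection $z$. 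One then argues that $P$ agrees with multiplication by $z$ on $A^{**}$: both are bimodule maps killing $(1-z)A^{**}$ and fixing $A$, and $zA^{**}$ is the weak$^*$-closure of $zA$. Consequently $A\cong zA^{**}$ as $*$-algebras via $a\mapsto za$. Since $zA^{**}$ is a von Neumann algebra with $\kappa_E(E)=\bigl(\ker P\bigr)_\circ$ identified with its predual, we get directly that $A$ is $W^*$ with $E$ isomorphic to $(zA^{**})_*=A_*$. In particular $E$ is $*$-closed and contains the normal states, which also completes the GNS route above.

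The delicate point I expect is checking that $\ker P$ is a two-sided ideal and that $x\mapsto zx$ restricted to $A$ is a bijection onto $zA^{**}$. Injectivity follows since $P(za)=zP(a)$ relation forces $za=0\Rightarrow a=P(za)=0$. Surjectivity uses $P(zx)=zx$ together with $P$ mapping onto $A$.
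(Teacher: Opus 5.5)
Your projection argument is the paper's proof: $P=\kappa_E^*$ is a bounded idempotent onto $A$, $\ker P=\kappa_E(E)^\circ$ is a weak$^*$-closed ideal $(1-z)A^{**}$ with $z$ a central projection, and $T\colon a\mapsto z\kappa_A(a)$ is a $*$-isomorphism of $A$ onto the von Neumann algebra $zA^{**}$. The GNS discussion is therefore not needed. Your route to the ideal property is fine, though it differs slightly from the paper's. You show $P$ is an $A$-bimodule map, so $\ker P$ is a weak$^*$-closed $A$-sub-bimodule of $A^{**}$, hence an ideal by weak$^*$-density of $\kappa_A(A)$ and separate weak$^*$-continuity of the product. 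The paper instead checks that $P$ is multiplicative for the Arens product.

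There is, however, a genuine error in the step proving that $T$ is bijective: the relations you use are false. Evaluate each of ``$P$ agrees with multiplication by $z$'', ``$P(zx)=zx$'' and ``$P(za)=zP(a)$'' at $x=\kappa_A(a)$, using $P\kappa_A=\mathrm{id}$ and the correct identity $P(z\kappa_A(a))=a$. Each then says $z\kappa_A(a)=\kappa_A(a)$ for all $a\in A$. By weak$^*$-density this forces $z=1$, i.e.\ $\ker P=\{0\}$, i.e.\ $\kappa_E(E)=A^*$. Then $E$ and $A$ would be reflexive, which happens only in finite dimensions. In particular, multiplication by $z$ does \emph{not} fix $A$, so your justification via ``two bimodule maps killing $(1-z)A^{**}$ and fixing $A$'' collapses.

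What is true, and is exactly what the paper uses, is the following. Since $(1-z)\kappa_A(a)\in\ker P$, we get $P(z\kappa_A(a))=P(\kappa_A(a))=a$, so $T(a)=0$ implies $a=0$. Since $P$ is idempotent, $x-\kappa_A(P(x))\in\ker P=(1-z)A^{**}$ for every $x\in A^{**}$, so $zx=z\kappa_A(P(x))=T(P(x))$, which gives surjectivity. With these two lines in place of yours, the argument is complete.

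Your identification of the predual is then more direct than the paper's, which appeals to the uniqueness theorem of \cite{DHW_conditions_wstar_top} for dual Banach algebra preduals of $W^*$-algebras. Since $\kappa_E\colon E\to A^*$ is bounded below (as $E^{**}\cong A^*$), $\kappa_E(E)$ is norm-closed. Hence $\kappa_E(E)={}_\circ(\ker P)=\{f\in A^*: f(x)=f(zx)\ (x\in A^{**})\}$. Every normal functional on $zA^{**}$ is the restriction of such an $f$, and $f(T(a))=f(z\kappa_A(a))=f(\kappa_A(a))$, which is just $f$ evaluated at $a$. So transporting the predual of $zA^{**}$ along $T$ gives exactly $\kappa_E(E)$, and $E$ coincides with $A_*$ inside $A^*$ with equivalent norms. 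You only assert this identification, so it should be written out, but it is correct and removes the need for that citation.
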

\begin{proof}
For any Banach algebra $B$, there are two algebra products, the \emph{Arens products}, see \cite[Section~1.4]{Palmer_Book1} for example, which can be constructed on $B^{**}$ such that $\kappa_B \colon B \to B^{**}$ becomes an algebra homomorphism: we either extend the product by weak$^*$-continuity on one side, or the other, of the product.  An algebra is \emph{Arens regular} when these products coincide, and it is known that $C^*$-algebras are Arens regular (this follows from a calculation using e.g. the universal representation of $A$, for example \cite[Lemma~3]{palmer_arens_mult_Wstar}).  Now set $B = (E_*)^*$ and denote by $\phi \colon A\to B$ the Banach algebra isomorphism.  Then $B$ is Arens regular, and $\phi^{**} \colon A^{**} \to B^{**}$ is an algebra isomorphism.

For any Banach space $E$ we have that $\kappa_E^* \circ \kappa_{E^*} \colon E^* \to E^*$ is the identity, and so $\kappa_E^* \colon E^{***} \to E^*$ can be regarded as a projection, often called the \emph{Dixmier projection}, see \cite[Proposition~1.2]{Runde_DBA} for example.  In our case, we obtain $\kappa_E^* \colon B^{**} \to B$, and this can be verified to be an algebra homomorphism (in general, for either Arens product).  So $\ker(\kappa_E^*) \subseteq B^{**}$ is an ideal, and $\sigma(B^{**}, B^*)$-closed (as the kernel of the Banach space adjoint of an operator).  So $I = (\phi^{-1})^{**}(\ker(\kappa_E^*)) \subseteq A^{**}$ is an ideal and is $\sigma(A^{**}, A^*)$-closed.  As $A^{**}$ is a $W^*$-algebra, there is a central projection $z\in A^{**}$ with $I = (1-z)A^{**}$.  Furthermore, $z' = \phi^{**}(z) \in B^{**}$ is a central idempotent, and $\ker(\kappa_E^*) = (1-z') B^{**}$.

We claim that $T \colon A \to zA^{**}; a\mapsto z\kappa_A(a)$ is a $*$-isomorphism.  As $z$ is central and $\kappa_A$ is a $*$-homomorphism, $T$ is a $*$-homomorphism.  If $T(a)=0$ then $\kappa_A(a) = (1-z)\kappa_A(a)$ and so $\phi^{**}\kappa_A(a) \in \ker(\kappa_E^*)$.  However, $\phi^{**}\kappa_A(a) = \kappa_B\phi(a)$ so $0 = \kappa_E^* \kappa_B\phi(a) = \phi(a)$ so $a=0$ and we conclude that $T$ is injective.  Let $x\in A^{**}$ and set $b = \kappa_E^*\phi^{**}(x) \in B$.  Then
\[ T\phi^{-1}(b) = z \cdot \kappa_A\phi^{-1}(b)
= z\cdot (\phi^{**})^{-1}\kappa_B \kappa_E^*\phi^{**}(x)
= (\phi^{**})^{-1}\bigl( z'\cdot \kappa_B \kappa_E^*\phi^{**}(x) \bigr). \]
(We write $\cdot$ to denote the product here to avoid confusion with the composition of linear maps.)
As $\kappa_B \kappa_E^*$ is an idempotent, $\phi^{**}(x) = \kappa_B \kappa_E^*\phi^{**}(x) + x'$ for some $x' \in \ker\kappa_E^* = (1-z')B^{**}$, so that $z'x'=0$.  Hence $T\phi^{-1}(b) = (\phi^{**})^{-1}(z' \phi^{**}(x)) = z x$, and we conclude that $T$ is surjective.

So $A$ is $*$-isomorphic to $zA^{**}$ which is a W$^*$-algebra, and so $A$ is a $W^*$-algebra.  That $A_* = E_*$ now follows from the uniqueness result \cite[Theorem~5.2]{DHW_conditions_wstar_top} that a $W^*$-algebra has a unique dual Banach algebra predual.
\end{proof}

Our aim now is to show that when we have a $C^*$-category with the property that each $\hom(A,B)$ has an isomorphic predual $\hom(A,B)_*$ which is an $A$-$B$-module, then we have a $W^*$-category.  Note that this condition is not that suggested by Proposition~\ref{prop:via_bimod}, but is somewhat more ``local'' in character.  As in Section~\ref{sec:self-dual}, there are links here with Hilbert $C^*$-modules, and indeed, \cite[Theorem~2.6]{Schweizer_HilbModsPredual} is extremely relevant.  Unfortunately, the proof of this result suffers from exactly the same problem identified in \cite{GLR_Wstar_categories}: a $C^*$-algebra is considered and an \emph{isomorphic} predual constructed.  This does not give a $W^*$-algebra, in general, and so we need to find an alternative argument.  We first indicate how the first part of the proof of \cite[Theorem~2.6]{Schweizer_HilbModsPredual} works, and then present an alternative argument to finish the proof.

In the following, $\proten$ denotes the projective tensor product of Banach spaces.  Given Banach spaces $E,F$ the dual space of $E\proten F$ is identified with $\mc B(F,E^*)$ for the duality $\ip{T}{x\otimes y} = \ip{T(y)}{x}$ for $x\otimes y\in E\otimes F \subseteq E\proten F$ and $T\in\mc B(F,E^*)$.  For a general reference see \cite{Ryan_TensorBook} for example.

\begin{proposition}[{\cite[Theorem~2.6]{Schweizer_HilbModsPredual}}]\label{prop:KE_wstar_selfdual}
Let $A$ be a $C^*$-algebra and $E$ a Hilbert $C^*$-module over $A$.  Consider the action of $\mc K(E)$ on (the left of) $E$, and suppose $E$ has a (possibly not isometric) predual such that for each $\theta\in\mc K(E)$ the map $E\to E; x\mapsto \theta(x)$ is weak$^*$-continuous.  Then $E$ is self-dual.
\end{proposition}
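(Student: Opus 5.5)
Let $E_*$ be the given predual, and let $B = \mc K(E)$. The plan is to turn the weak$^*$-continuity of the $\mc K(E)$-action into a structure in which Theorem~\ref{thm:cstar_dba} and the results of Section~\ref{sec:self-dual} can be applied.

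First, since each $\theta\in\mc K(E)$ acts weak$^*$-continuously, its preadjoint gives a right action of $\mc K(E)$ on $E_*$, so $E_*$ is a right $\mc K(E)$-module. The same then holds for the multiplier algebra $\mc L(E) = M(\mc K(E))$, at least after passing to suitable closures.

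Next, form the linking-type algebra
\[ L = \begin{pmatrix} \mc K(E) & E \\ \overline E & C \end{pmatrix}, \qquad C = \overline\lin\{(x|y)\}. \]
Using Proposition~\ref{prop:mod_is_hom_space}, this is a $C^*$-algebra. The idea, following the first part of Schweizer's proof, is to replace $\mc K(E)$ and $C$ by their weak$^*$-closures inside a dual structure. Concretely, the projective tensor product $E_*\proten \overline{E}_*$ with the pairing $\ip{T}{x\otimes y}$ can be used to build a predual of the bounded sesquilinear forms, and hence a candidate predual for the corner algebras. I would then aim to show that the enlarged algebra
\[ \widetilde L = \begin{pmatrix} \mc L(E) & E \\ \overline E & M(C) \end{pmatrix} \]
has an isomorphic predual, given by a direct sum of the corner preduals, for which multiplication is separately weak$^*$-continuous. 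The weak$^*$-continuity of left multiplication by $\mc K(E)$ on $E$ is exactly what gives continuity in the off-diagonal products. In other words, $\widetilde L$ is a $C^*$-algebra that is a dual Banach algebra.

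Theorem~\ref{thm:cstar_dba} then says that $\widetilde L$ is a $W^*$-algebra. This is precisely the repair needed, since Schweizer only had an isomorphic predual. Because $E$ is a corner of $\widetilde L$ cut down by projections, $E$ becomes isometrically a dual space and a $W^*$-TRO. Section~\ref{sec:self-dual} and \cite[Corollary~3.5]{BM_Duality_OpAlgI} then show that $E$ is self-dual over $M(C)$. Finally, the ideal argument of \cite[Lemma 8.5.2]{BlecherLeMerdy_Book} gives self-duality over $C$ and over $A$.

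The main obstacle is the middle step: showing that $\widetilde L$ is genuinely a dual Banach algebra. This requires defining the preduals of $\mc L(E)$ and $M(C)$ compatibly, so that the products $E\times\overline E\to\mc L(E)$ and $\overline E\times E\to C$ are separately weak$^*$-continuous. Only continuity of $\theta$ on $E$ is assumed, so I would first try to define the predual of the diagonal corners as the closed span of the functionals $\theta\mapsto \ip{\theta(x)}{\mu}$ and $a\mapsto\ip{xa}{\mu}$, and then check the module property.
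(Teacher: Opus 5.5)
Your proposal has a genuine gap. It sits in the step you yourself call the main obstacle, and that step carries the entire content of the proposition.

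\textbf{Why the middle step fails.} Nothing in the hypotheses gives you preduals for $\mc L(E)$ or $M(C)$. Your candidate is the closed span of the functionals $T\mapsto\ip{T(x)}{\mu}$, a quotient of $E_*\proten E$. It can only serve as a predual if $\mc L(E)$ is weak$^*$-closed in $\mc B(E)=(E_*\proten E)^*$. Likewise, the span of the $a\mapsto\ip{x\cdot a}{\mu}$ needs the right multiplications by $M(C)$ to be weak$^*$-closed there. Proving such closedness is exactly what self-duality supplies. In the paper, the identification $\mc B_B(E)=\mc L_B(E)\cong M(C)$ is made only once self-duality is known, via Paschke's result that bounded module maps on a self-dual module are adjointable. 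This is why the dual Banach algebra argument with Theorem~\ref{thm:cstar_dba} appears only in Theorem~\ref{thm:dba_for_mods}, which takes the present proposition as input. Used here, it is circular.

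\textbf{Missing hypothesis.} Independently, separate weak$^*$-continuity of the product on $\widetilde L$ with the direct-sum predual needs more than you have. For instance, the product of the $(1,2)$ and $(2,2)$ corners requires $x\mapsto x\cdot a$ to be $\sigma(E,E_*)$-continuous for $a\in C$, i.e.\ a weak$^*$-continuous right action. That is not assumed here; Theorem~\ref{thm:dba_for_mods} has to impose it as an extra hypothesis. Left continuity of $\mc K(E)$ only controls the action of $\mc L(E)$ on $E$ in the vector variable. It says nothing about products involving the $M(C)$ corner, or about the pairings $E\times\overline E\to\mc L(E)$ and $\overline E\times E\to M(C)$. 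Your final steps (corners of a $W^*$-algebra, \cite[Corollary~3.5]{BM_Duality_OpAlgI}, \cite[Lemma~8.5.2]{BlecherLeMerdy_Book}) are sound, but they all sit downstream of this unproved step.

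\textbf{The missing idea.} What is needed is a direct approximation argument; this is the actual first part of Schweizer's proof, and it is what the paper does. It uses only that bounded nets in $E$ have weak$^*$-convergent subnets (true for any isomorphic predual), plus weak$^*$-continuity of each $\theta_{x,y}$.
\begin{enumerate}[(1)]
\item Let $C=\overline\lin\{(x|y)\}$ and let $T\colon E\to C$ be a bounded $C$-module map. Take an approximate unit $(e_i)$ of $\mc K(E)$ with $\|e_i\|\leq 1$, each $e_i=\sum_j\theta_{\xi_j,\eta_j}$ a finite sum. Set $x_i=\sum_j\eta_j\cdot T(\xi_j)^*$.
\item Then $T(e_i\cdot x)=(x_i|x)$ for all $x\in E$. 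Hence $\|x_i\|^2=\|T(e_i\cdot x_i)\|\leq\|T\|\|x_i\|$, so the net is bounded and, along a subnet, $x_i\to x_0$ weak$^*$.
\item For $x,y\in E$ the hypothesis gives $x\cdot(y|x_i)=\theta_{x,y}(x_i)\to\theta_{x,y}(x_0)=x\cdot(y|x_0)$ weak$^*$. On the other hand, $x\cdot(y|x_i)=x\cdot T(e_i\cdot y)^*\to x\cdot T(y)^*$ in norm.
\item So $a=(y|x_0)-T(y)^*\in C$ satisfies $E\cdot a=\{0\}$. Then $(z|x)a=0$ for all $z,x$, so $Ca=\{0\}$, so $a^*a=0$ and $a=0$. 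That is, $T=(x_0|\cdot)$.
\end{enumerate}
This is self-duality over $C$, and \cite[Lemma~8.5.2]{BlecherLeMerdy_Book} passes it to $A$. The paper phrases the last step through the maps $t_a$ and $E_*\proten_B E$, but it is the same computation. No $W^*$-algebra and no appeal to Theorem~\ref{thm:cstar_dba} is required.
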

\begin{proof}
We follow the idea of the first part of the proof of \cite[Theorem~2.6]{Schweizer_HilbModsPredual}, but we give some extra details, as we find \cite{Schweizer_HilbModsPredual} hard to follow.  Set $B = \mc K(E)$.  The weak$^*$-continuity assumption on the predual $E_*$ is equivalent to $E_*$ being a right $B$-module, in a way compatible with the left $B$-module structure of $E$.  Define
\[ E_* \proten_B E = E_* \proten E \ / \ \overline\lin\{ \mu\otimes\theta \cdot x - \mu\cdot\theta \otimes x : x\in E, \mu\in E_*, \theta\in B \}. \]
As $E_* \proten_B E$ is a quotient of $E_*\proten E$, we see that $(E_*\proten_B E)^*$ is a subspace of $\mc B(E,E)$, namely $(E_*\proten_B E)^* = \{ t\in\mc B(E) : t(\theta\cdot x) = \theta\cdot t(x) \ (x\in E, \theta\in B) \}$ the algebra of $B$-module maps on $E$, denoted $\mc B_B(E)$.  Note that as $E_*$ is not assumed to be an isometric predual, the dual space $(E_*\proten_B E)^*$ is also not necessarily isometric to $\mc B_B(E)$, only isomorphic.

Again denote by $C$ the closed linear span of $\{ (x|y) : x,y\in E \}$, so $C$ is an ideal in $A$, and $E$ can be regarded as a full module over $C$.  Let $T \colon E \to C$ be a bounded $C$-module map.  As $\mc K(E)$ has an approximate identity, by density, we can find a net $(e_i)$ in $\lin\{ \theta_{x,y} : x,y\in E \} \subseteq \mc K(E)$ such that $\|e_i\|\leq 1$ for each $i$, and with $\lim_i e_i \theta = \theta = \lim_i \theta e_i$ in norm, for each $\theta\in\mc K(E)$.  For each $i$, let $e_i = \sum_{j=1}^n \theta_{\xi_j, \eta_j}$ say, and set $x_i = \sum_{j=1}^n \eta_j \cdot T(\xi_j)^*$, so that
\[ T(e_i \cdot x) = \sum_{j=1}^n T(\xi_j \cdot (\eta_j|x))
= \sum_{j=1}^n T(\xi_j) \cdot (\eta_j|x)
= \sum_{j=1}^n (\eta_j \cdot T(\xi_j)^*|x)
= (x_i|x)  \qquad (x\in E). \]
Then $\|x_i\|^2 = \| (x_i|x_i) \| = \| T(e_i\cdot x_i) \| \leq \|T\| \|x_i\|$, and so $(x_i)$ is a bounded net in $E$.  By moving to a subnet if necessary, we may suppose that $x_i \to x_0$ weak$^*$, for some $x_0\in E$.

Each $a\in C$ gives rise to $t_a \in \mc B_B(E)$ defined by $t_a(x) = x\cdot a$ for $x\in E$; indeed, for $y,z\in E$ we have $t_a(\theta_{y,z}\cdot x) = (y\cdot(z|x))\cdot a = y\cdot (z|x)a = y\cdot (z|x\cdot a) = \theta_{y,z} \cdot t_a(x)$ for each $x$, so $t_a$ is a $B$-module map.  Notice that $C \to \mc B_B(X); a \mapsto t_a$ is contractive.  This map is also injective, as if $t_a=0$ then $x \cdot a = 0$ for all $x\in E$, so $0 = (y|x\cdot a) = (y|x) a$ for all $x,y$, so $ba=0$ for all $b\in C$ by density, so $a=0$.

A typical member of $E_* \proten_B E$ is $\mu\otimes x$ for $\mu\in E_*$ and $x\in E$, and then for $y,z\in E$, let $a=(y|z)\in C$, so
\[ \ip{ t_{(y|z)} }{ \mu\otimes x } = \ip{ t_a }{ \mu\otimes x }
= \ip{t_a(x)}{\mu} = \ip{x\cdot a}{\mu}
= \ip{x \cdot (y|z)}{\mu} = \ip{\theta_{x,y}\cdot z}{\mu}
= \ip{z}{\mu \cdot \theta_{x,y}}, \]
using that $E_*$ is a right $B$-module.  Using this relation, we see that
\[ \ip{t_{(y|x_0)}}{\mu\otimes x} = \ip{x_0}{\mu\cdot\theta_{x,y}}
= \lim_i \ip{x_i}{\mu\cdot\theta_{x,y}}
= \lim_i \ip{t_{(y|x_i)}}{\mu\otimes x}
= \lim_i \ip{t_{T(e_i\cdot y)^*}}{\mu\otimes x}, \]
in the last step using that $T(e_i\cdot y)^* = (x_i|y)^* = (y|x_i)$.  As $e_i\cdot y \to y$ is norm, also $T(e_i\cdot y)^* \to T(y)^*$ in norm in $B$, so $t_{T(e_i\cdot y)^*} \to t_{T(y)^*}$ in norm, so also weak$^*$.  As elements $\mu\otimes x$ have dense linear span in $E_* \proten_B E$ we conclude that $t_{(y|x_0)} = t_{T(y)^*}$.  Hence $(y|x_0) = T(y)^*$ so $T(y) = (x_0|y)$.  Thus $E$ is self-dual, over $C$.

As $C$ is an ideal in $A$, this also shows that $E$ is self-dual over $A$, see \cite[Lemma~8.5.2]{BlecherLeMerdy_Book} for example.
\end{proof}

We now give an alternative way to show the second part of \cite[Theorem~2.6]{Schweizer_HilbModsPredual}, bypassing the erroneous issue with isomorphic, not isometric, ``preduals'' of $W^*$-algebras.  We collect some useful remarks from (for example) \cite{BMS_quasimults}.  As well as the notion of a right Hilbert $C^*$-module, we also have that of a \emph{left} Hilbert $C^*$-module, say with inner-product (now linear in the first variable) $[\cdot, \cdot]$.  Then a \emph{Hilbert $B$-$A$-bimodule} $E$ is a right Hilbert $C^*$-module, say over $A$, and a left Hilbert $C^*$-module, say over $B$, with the compatibility that $[x,y]\cdot z = x \cdot (y|z)$ for each $x,y,z\in E$.  Given just the right structure, we can always take $B = \mc K(E)$ with the canonical left action, and with $[x,y] = \theta_{x,y}$; indeed, by \cite[Proposition~1.10]{BMS_quasimults} this is essentially the only example.

Given a Hilbert $B$-$A$-bimodule $E$, let $\overline E = \{ \overline x : x\in E \}$ be the conjugate Banach space, made into a $A$-$B$-bimodule for the actions and inner-products
\[ a \cdot \overline x = \overline{x\cdot a^*}, \ 
\overline x\cdot b = \overline{b^*\cdot x}, \quad
[\overline x, \overline y] = (x|y), \ 
(\overline x|\overline y) = [x,y] \qquad (x,y\in E, a\in A, b\in B). \]
One can check that all the requirements are satisfied; for example $[\overline x, \overline y] \cdot \overline z = \overline{z\cdot(y|x)} = \overline{ [z,y]\cdot x } = \overline x \cdot (\overline y|\overline z)$ as $(x|y)^* = (y|x)$ and so forth.  Thus often one can prove results just about $A$, say, and then use this mechanism to translate to an analogous result about $B$.

\begin{theorem}\label{thm:dba_for_mods}
Let $A$ be a $C^*$-algebra and $E$ a Hilbert $C^*$-module over $A$ which is isomorphic (perhaps not isometric) to a dual space.  Suppose the left action of $\mc K(E)$, and the right action of $A$, on $E$ are both weak$^*$-continuous.  With $C = \overline\lin\{(x|y):x,y\in E\}$, we have that $E$ is self-dual over $C$, and that $M(C)$ is a $W^*$-algebra.  Furthermore, treating the inner-product on $E$ as over $M(C)$, the $M(C)$-valued inner-product on $E$ is separately weak$^*$-continuous, and so $E_*$ is isomorphic to the canonical predual constructed by Paschke.
\end{theorem}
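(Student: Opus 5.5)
Self-duality of $E$ over $C$ is exactly Proposition~\ref{prop:KE_wstar_selfdual}, which uses only weak$^*$-continuity of the left $\mc K(E)$ action. So the real work is to show that $M(C)$ is a $W^*$-algebra, and for this I will use Theorem~\ref{thm:cstar_dba}: it suffices to show that $M(C)$ is a dual Banach algebra, with a possibly non-isometric predual.

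The plan is to realise $M(C)$ as a weak$^*$-closed subalgebra of $\mc B(E)$, via the injective map $a\mapsto t_a$, $t_a(x)=x\cdot a$, from the proof of Proposition~\ref{prop:KE_wstar_selfdual}. As $E$ is full over $C$, the right action of $A$ factors through $C$ and $M(C)$ acts on the right of $E$. I will work in $\mc B_B(E)=(E_*\proten_B E)^*$ with $B=\mc K(E)$. This space is a Banach algebra (the product reversed so that $t_at_b=t_{ab}$), and its product is separately weak$^*$-continuous for this predual: left and right composition by a fixed weak$^*$-continuous module map preserve the relations defining $E_*\proten_B E$. The restriction to weak$^*$-continuous maps is where the hypothesis that the right action of $A$ is weak$^*$-continuous enters.

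Next I identify the image of $M(C)$. By self-duality, $\mc B_B(E)$ should coincide with $\{t_m : m\in M(C)\}$. Given $t\in\mc B_B(E)$ and $y\in E$, the map $x\mapsto (y|t(x))$ is a bounded $C$-module map. I then check that $t(x)=x\cdot m$ for a multiplier $m$, defining $m$ on $C$ by $(y|z)\mapsto (y|t(z))$ and verifying that this is well defined using the module property $t(\theta_{u,v}z)=\theta_{u,v}t(z)$.

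With the identification in hand, the algebra isomorphism $M(C)\cong\mc B_B(E)$ is bounded below: the norm of $t_m$ is at least $\|m\|$, by testing on $x=e_i$-type elements built from the approximate unit of $C$. Hence $M(C)$ is a $C^*$-algebra carrying a dual Banach algebra structure, and Theorem~\ref{thm:cstar_dba} gives that $M(C)$ is $W^*$. The main obstacle is this identification $\mc B_B(E)=M(C)$; if it fails I would instead take the weak$^*$-closure of $\{t_c\}$ and prove it equals the image of $M(C)$, using bounded approximate-unit limits as in the lemma preceding Theorem~\ref{thm:main}.

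For separate weak$^*$-continuity of $(x,y)\mapsto(x|y)\in M(C)$, fix $y$. The map $x\mapsto t_{(y|x)}$ satisfies $t_{(y|x)}(z)=\theta_{z,y}(x)$, and each $\theta_{z,y}$ is weak$^*$-continuous on $E$. Pairing against $\mu\otimes z$ gives exactly the computation in the proof of Proposition~\ref{prop:KE_wstar_selfdual}, which yields weak$^*$-continuity into $\mc B_B(E)$. The weak$^*$-topology there agrees with the canonical one on $M(C)$ by Theorem~\ref{thm:cstar_dba}, and the other variable follows by taking adjoints. Finally, Paschke's predual is characterised as the coarsest topology making the functionals $x\mapsto\omega((y|x))$, for $\omega\in M(C)_*$, continuous. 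The weak$^*$-topology of $E_*$ therefore dominates it on bounded sets, and the comparison of two dual structures in Lemma~\ref{lem:iso_plus_ws_means_wsh}, applied to the identity map, yields $E_*\cong$ Paschke's predual.
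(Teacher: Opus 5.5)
There is a genuine gap. Your architecture matches the paper's: work in $\mc B_B(E)=(E_*\proten_B E)^*$ with $B=\mc K(E)$, show it is a dual Banach algebra isomorphic to $M(C)$, apply Theorem~\ref{thm:cstar_dba}, then use $t_{(y|x)}(z)=\theta_{z,y}(x)$. But the two steps that carry the real content are not established.

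\textbf{The identification $\mc B_B(E)=\{t_m : m\in M(C)\}$.} Your sketch of this step does not work.
\begin{itemize}
\item The map $x\mapsto (y|t(x))$ is not a right $C$-module map: for $t=t_m$, compare $(y|x)cm$ with $(y|x)mc$. So self-duality over $C$ cannot be applied to it; that self-duality is on the wrong side.
\item Your formula $(y|z)\mapsto(y|t(z))$ does give a well-defined bounded $R\colon C\to C$. However, it only satisfies $R(dc)=dR(c)$, so it is a one-sided centraliser. Bounded module maps need not be adjointable, and those that are not do not come from $M(C)$.
\item What is needed is adjointability of $t$ for the left inner product $[\cdot,\cdot]$. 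The paper applies Proposition~\ref{prop:KE_wstar_selfdual} to the \emph{left} structure (or to $\overline E$). There the compact operators are $\mc K_B(E)\cong C^{\mathrm{op}}$, acting by the right action, and this is where the hypothesis on the right $A$-action is really used. So $E$ is self-dual over $B$, and Paschke's result gives $\mc B_B(E)=\mc L_B(E)=M(\mc K_B(E))\cong M(C)^{\mathrm{op}}$.
\item Your fallback via the lemma before Theorem~\ref{thm:main} is unavailable, because that lemma needs an ambient von Neumann algebra.
\end{itemize}

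\textbf{Separate weak$^*$-continuity of the product.} Since $\ip{ST}{\mu\otimes x}=\ip{T(x)}{\mu\circ S}$, you need $\mu\circ S\in E_*$ for \emph{every} $S\in\mc B_B(E)$, i.e.\ $E_*$ must be a right $\mc B_B(E)$-module.
\begin{itemize}
\item The hypothesis gives this only for $S=t_a$ with $a\in A$, and $M(C)$ is usually larger than the image of $A$.
\item Passing to the weak$^*$-continuous elements gives a subalgebra with no reason to be weak$^*$-closed.
\item The missing argument: $\lin\{\mu\circ t_a : a\in C,\ \mu\in E_*\}$ is norm dense in $E_*$, since its annihilator in $E$ is $\{x: x\cdot C=0\}=\{0\}$. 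Also $\mu\circ t_a\circ t_\alpha=\mu\circ t_{\alpha a}\in E_*$ because $\alpha a\in C$. Continuity then gives $\mu\circ S\in E_*$ for all $\mu\in E_*$.
\end{itemize}

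With these two points supplied, the rest goes through. In the continuity step you must also handle general elements $\sum_n\mu_n\otimes x_n$ of $E_*\proten_B E$, not just elementary tensors; the series $\sum_n\mu_n\cdot\theta_{x_n,y}$ converges absolutely in $E_*$. Your final use of Lemma~\ref{lem:iso_plus_ws_means_wsh} on the identity map is a valid substitute for the uniqueness lemma the paper cites.
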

\begin{proof}
By Proposition~\ref{prop:KE_wstar_selfdual}, $E$ is self-dual over $A$.  Working with left modules instead (or use $\overline E$, see the discussion above), we see that $E$ is self-dual over $B = \mc K_A(E)$ as well (and note that $E$ is full over $B$).  As observed by Paschke in \cite[Proposition~3.4]{paschke_inner_prod_mods}, when $E$ is self-dual (not necessarily over a $W^*$-algebra) all $B$-module maps are adjointable (the proof is like that for Hilbert spaces).  Hence $\mc B_B(E) = \mc L_B(E)$.

As in the proof of Theorem~\ref{prop:KE_wstar_selfdual}, $\mc B_B(E)$ has (isomorphic) predual $E_* \proten_B E$, which we claim is a dual Banach algebra predual, that is, $E_* \proten_B E$ is a $\mc B_B(E)$-bimodule.  The left action is easy: by linearity and continuity, there is a $\mc B_B(E)$ action on $E_* \proten E$ given by $T\cdot(\mu\otimes x) = \mu\otimes T(x)$.  As $T\in\mc B_B(E)$  commutes with the left action of $B$, this action drops to a well-defined action on the quotient $E^* \proten_B E$.  Then
for $S, T \in\mc B_B(E)$ and $\mu\otimes x \in E_* \proten_B E$ we have $\ip{ST}{\mu\otimes x} = \ip{ST(x)}{\mu} = \ip{S}{T\cdot(\mu\otimes x)}$ as required to show that the $\mc B_B(E)$ action on $E_* \proten_B E$ is compatible with the product on $\mc B_B(E)$.  We now work towards showing the same of the right action.

We claim that $\lin\{ a\cdot \mu : \mu\in E_*, a\in C \}$ is norm dense in $E_*$.  To show this, let $x\in E$ with $0 = \ip{x}{a\cdot\mu} = \ip{x\cdot a}{\mu}$ for each $a\in C, \mu\in E_*$.  Then $x\cdot a=0$ for all $a\in C$, so $x=0$, and the claim follows by Hahn--Banach.  There is then a contractive left action of $M(C)$ on $E_*$ satisfying $\alpha\cdot (a\cdot\mu) = \alpha a \cdot \mu$ for $\alpha\in M(C), a\in C, \mu\in E_*$.  Indeed, for any $\mu' \in \lin\{ a\cdot \mu : \mu\in E_*, a\in C \}$ we have $\alpha \cdot \mu' = \lim_i \alpha e_i \cdot \mu'$ for any approximate identity $(e_i)$ for $C$, from which it follows that the action is well-defined, contractive, and so extends by continuity to all of $E_*$.

By \cite[Proposition~1.10]{BMS_quasimults} the map $t \colon C \to \mc B_B(E)$ considered before, $t_a(x) = x\cdot a$, gives an anti-isomorphism between $C$ and $\mc K_B(E)$, the compact operators on $E$ for the left structure.  Indeed, consider the compact operator $\theta'_{x,y} \colon E\to E; z\mapsto [z,x]\cdot y$.  Then $\theta'_{x,y}(z) = z \cdot (x|y) = t_{(x|y)}(z)$ for each $z$, so $t_{(x|y)} = \theta'_{x,y}$.  This extends to the multiplier algebras to give an anti-isomorphism $t \colon M(C) \to M(\mc K_B(E)) = \mc L_B(E)$ which satisfies $t_\alpha(x\cdot a) = t_\alpha (t_a(x)) = t_{a \alpha}(x) = x\cdot (a \alpha)$ for $x\in E, a\in C, \alpha\in M(C)$, here reversing the product as an anti-homomorphism.  As $\lin\{ x\cdot a : x\in E, a\in C \}$ is dense in $E$ (again, see \cite[page~5]{Lance_HilbModsBook} for example) this relation completely determines $t_\alpha$.

Let $S\in\mc B_B(E) = \mc L_B(E)$, so there is $\alpha\in M(C)$ with $t_\alpha = S$.  For $a\in C, \mu\in E_*$ consider $\mu' = a\cdot \mu \in E_*$ as an element of $E^*$.  As $E$ is a left $\mc B_B(E)$ module, $E^*$ is a right module, and so $\mu'\cdot S$ makes sense.  For $x\in E, b\in C$ let $x' = x\cdot b\in E$, so
\begin{align*}
\ip{\mu'\cdot S}{x'} = \ip{S(x')}{\mu'} = \ip{t_\alpha(x\cdot b)}{a\cdot\mu} = \ip{x\cdot(b \alpha)}{a\cdot\mu}
= \ip{x\cdot(b\alpha a)}{\mu}
= \ip{x'}{(\alpha a)\cdot \mu}.
\end{align*}
By density of such $x'$, we have that $\mu'\cdot S = (\alpha a)\cdot \mu \in E_*$, and so by density of such $\mu'$, we have shown that $E_*$ is a right $\mc B_B(E)$-module.

Then for $S, T \in\mc B_B(E)$ and $\mu\otimes x \in E_* \proten_B E$ we have $\ip{ST}{\mu\otimes x} = \ip{ST(x)}{\mu} = \ip{T(x)}{\mu\cdot S} = \ip{T}{\mu\cdot S \otimes x}$ which shows that $E_* \proten_B E$ is also a right $\mc B_B(E)$-module.  Hence $E_* \proten_B E$ is a predual turning $\mc B_B(E) = \mc L_B(E) \cong M(C)$ into a dual Banach algebra.  By Theorem~\ref{thm:cstar_dba}, $M(C)$ is a $W^*$-algebra, and $E_* \proten_B E$ is isomorphic to $M(C)_*$ (meaning these preduals are equal, when considered as subspaces of $M(C)^*$).

Let $\omega \in M(C)_*$, so there are $(x_n)\subseteq E, (\mu_n)\subseteq E_*$ with $\sum_n \|\mu_n\| \|x_n\| < \infty$ and with $\ip{\alpha}{\omega} = \sum_n \ip{t_\alpha(x_n)}{\mu_n}$ for each $\alpha \in M(C)$.  Given $x,y\in E$, as $(x|y)\in C \subseteq M(C)$, we see that
\[ \omega((y|x))
= \sum_n \ip{t_{(y|x)}(x_n)}{\mu_n} 
= \sum_n \ip{x_n \cdot (y|x)}{\mu_n} 
= \sum_n \ip{\theta_{x_n, y} \cdot x}{\mu_n} 
= \ip{x}{\sum_n \mu_n \cdot \theta_{x_n, y}}, \]
where the final sum converges absolutely in $E_*$, as $E_*$ is a right $\mc K_A(E)$-module.  It follows that $E \to M(C); x \mapsto (y|x)$ is $\sigma(E, E_*)$-$\sigma(M(C), M(C)_*)$ continuous, as claimed.  Similarly $x\mapsto (x|y)$ is weak$^*$-continuous, as the adjoint on $M(C)$ is weak$^*$-continuous.  This continuity condition on the $M(C)$-valued inner-product ensures uniqueness of the predual $E_*$, see \cite[Lemma~8.5.4]{BlecherLeMerdy_Book}, noting that a careful examination of the proof of this lemma shows that the predual is not assumed to be isometric.
\end{proof}

\begin{remark}
An examination of the proof shows that we only need that the action of $C$ on $E$ is weak$^*$-continuous.  However, as $C$ is an ideal in $A$, the argument showing we can extend the action of $C$ on $E_*$ to $M(C)$ also works to show that $E_*$ has a natural $A$ action under this (hence not actually) weaker condition.
\end{remark}

We have an almost immediate application to $W^*$-categories.

\begin{theorem}\label{thm:wstar_from_bimods}
Let $\mc A$ be a $C^*$-catgeory, and for each $A,B\in\mc A$ suppose that $\hom(A,B)$ is isomorphic (not necessarily isometric) to the dual of some Banach space $\hom(A,B)_*$ which is an $A$-$B$-bimodule, compatible under duality with the $B$-$A$-bimodule structure on $\hom(A,B)$.  Then $\mc A$ is a $W^*$-category, $\hom(A,A)_*$ is the predual of the $W^*$-algebra $A$ for each $A\in\mc A$, and $\hom(A,B)_*$ is equal to the canonical predual of $\hom(A,B)$.
\end{theorem}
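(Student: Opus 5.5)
First, each $A=\hom(A,A)$ is a $C^*$-algebra whose predual $\hom(A,A)_*$ is an $A$-$A$-bimodule compatible with the product on $A$. This says exactly that multiplication on $A$ is separately weak$^*$-continuous, so $A$ is a dual Banach algebra. Theorem~\ref{thm:cstar_dba} then shows that $A$ is a $W^*$-algebra and that $\hom(A,A)_*$ is isomorphic to the canonical predual $A_*$. Since both preduals sit inside $A^*$, they are equal as subspaces there. This gives the second claim.

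Next fix $A,B$ and set $E=\hom(A,B)$, a right Hilbert $C^*$-module over $A$ with inner product $(x|y)=x^*y$. I plan to apply Theorem~\ref{thm:dba_for_mods} to $E$, which requires two weak$^*$-continuity conditions.

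\begin{itemize}
\item \emph{Right action of $A$.} This is weak$^*$-continuous because $\hom(A,B)_*$ is a module compatible with that action.
\item \emph{Left action of $\mc K(E)$.} The span of the operators $\theta_{x,y}$ acts on $E$ by $z\mapsto xy^*z$, that is, by left composition with $xy^*\in B$. The ideal $\overline{EE^\sharp}\subseteq B$ acts via the $B$-module structure, and this action is weak$^*$-continuous because the predual is a $B$-module. Composition with a fixed element of $B$ is weak$^*$-continuous, and $\mc K(E)$ is the norm-closure of the $\theta_{x,y}$. Norm limits of weak$^*$-continuous maps, with the convergence in operator norm, remain weak$^*$-continuous, since their preadjoints converge in norm. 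So every $\theta\in\mc K(E)$ acts weak$^*$-continuously. (Equivalently, one can use the Remark after Theorem~\ref{thm:dba_for_mods}.)
\end{itemize}

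Theorem~\ref{thm:dba_for_mods} then gives three conclusions: $E$ is self-dual over $C=\overline{\lin}\{x^*y\}$ (hence over $A$); the $M(C)$-valued inner product is separately weak$^*$-continuous; and $E_*$ is isomorphic to Paschke's canonical predual. Here $C$ is an ideal in the $W^*$-algebra $A$. By the lemma preceding Theorem~\ref{thm:main}, $M(C)\cong zA$ for a central projection $z$, with the weak$^*$ structures compatible because $zA$ is a weak$^*$-closed corner.

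Paschke's results \cite[Proposition~3.8]{paschke_inner_prod_mods} show that a self-dual module over a $W^*$-algebra is \emph{isometrically} a dual space. This is the key point: $\hom(A,B)$ has an isometric predual, so $\mc A$ is a $W^*$-category.

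The main obstacle is the identification of preduals. We must show that $\hom(A,B)_*$, as given, equals the isometric predual of Paschke, and is not merely another predual isomorphic to it. I will argue via the uniqueness statement \cite[Lemma~8.5.4]{BlecherLeMerdy_Book}, used as in the end of the proof of Theorem~\ref{thm:dba_for_mods}. Both preduals make the maps $x\mapsto (y|x)\in M(C)$ weak$^*$-continuous. Both therefore induce the same weak$^*$-topology on bounded sets of $E$, namely the one determined by the functionals $\omega((y|\cdot))$ for $\omega\in M(C)_*$. Consequently the two preduals coincide as subspaces of $E^*$.

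A subtle point remains. In the definition of a $W^*$-category, the isometric predual of $\hom(A,A)$ should agree with the canonical one, and that holds by the first step. The predual of $\hom(A,B)$ obtained from $A$ (as a right module) and the one obtained from $B$ (via $\overline E$ as a left module) must also agree. Both equal the given $\hom(A,B)_*$, so they do.
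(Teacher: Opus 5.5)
Your proposal is correct and follows essentially the paper's route. You verify the weak$^*$-continuity hypotheses of Theorem~\ref{thm:dba_for_mods} for $E=\hom(A,B)$, with the $\mc K(E)$-action being left composition with elements of $\overline{\lin}\{xy^*\}\subseteq B$, then obtain self-duality and invoke \cite[Proposition~3.8]{paschke_inner_prod_mods} for an isometric predual. Your preliminary use of Theorem~\ref{thm:cstar_dba} on $\hom(A,A)$ makes the claim about $\hom(A,A)_*$ explicit, where the paper leaves it implicit. The predual identification you call the main obstacle is already part of the conclusion of Theorem~\ref{thm:dba_for_mods}, so your re-derivation via \cite[Lemma~8.5.4]{BlecherLeMerdy_Book} is redundant but harmless.
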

\begin{proof}
Given $A,B$, again let $C = \overline\lin\{ x^*y : x,y\in \hom(A,B) \}$ and ideal in $A$, and set $D = \overline\lin\{ xy^* : x,y\in\hom(A,B) \}$ an ideal in $B$.  Then $E = \hom(A,B)$ can be considered as a Hilbert $D$-$C$-bimodule, noting that $E$ is full over both $D$ and $C$ (so, for example, $D \cong \mc K_C(E)$, and so forth, by \cite{BMS_quasimults}, compare Section~\ref{sec:mults}).  Theorem~\ref{thm:dba_for_mods} shows that $E$ is self-dual over $C$ and that $M(C)$ is a $W^*$-algebra.  Thus $E$ has a canonical isometric predual given by \cite[Proposition~3.8]{paschke_inner_prod_mods}, this in fact being equal to $\hom(A,B)_*$.  So each $\hom(A,B)$ has an isometric predual and hence $\mc A$ is a $W^*$-category.
\end{proof}

\appendix
\section{Reduction to essential subspace}\label{sec:make_nondeg}

Let $V\subseteq\mc B(K,H)$ be a TRO, and set $C = \overline{V V^\sharp} \subseteq \mc B(H)$ and $D = \overline{V^\sharp V} \subseteq\mc B(K)$, as in Section~\ref{sec:dualTROs}.  The problem we consider here is that $C$ and/or $D$ might not act non-degenerately.  We give a natural way to correct this.

Consider the linking algebra,
\[ L(V) = \begin{pmatrix} C & V \\ V^\sharp & D \end{pmatrix} \subseteq\mc B(H\oplus K). \]

Given a $C^*$-algebra $A\subseteq\mc B(L)$ for some Hilbert space $L$, setting $L_0 = \overline{AL} = \overline\lin\{x\xi : x\in A, \xi\in L\}$ we obtain an invariant subspace of $A$, such that the resulting $*$-homomorphism $A\to\mc B(L_0)$ is non-degenerate and injective (if $xy\xi=0$ for all $y,\xi$ then $xy=0$ for all $y$ so $xx^*=0$ so $x=0$).

The essential subspace of the linking algebra is, by definition,
\begin{align*}
& \overline{\lin}\big\{ (c\xi+u\eta, v^*\xi+d\eta) : c\in C, d\in D, u,v\in V, \xi\in H, \eta\in K \big\}   \\
&= \overline{\lin}\big\{ (xy^*\xi+u\eta, v^*\xi+z^*w\eta) : x,y,z,w,u,v\in V, \xi\in H, \eta\in K \big\}.
\end{align*}
This clearly contains $\overline{\lin} \{ (u\eta, v^*\xi) : u,v\in V, \xi\in H, \eta\in K \} = \overline{V K} \oplus \overline{V^\sharp H}$.  But conversely, $(xy^*\xi+u\eta, v^*\xi+z^*w\eta) = (u\eta, v^*\xi) + (x(y^*\xi), z^*(w\eta))$ which is the sum of two vectors in $\overline{V K} \oplus \overline{V^\sharp H}$.  We conclude that the essential subspace is $\overline{V K} \oplus \overline{V^\sharp H} = H_0 \oplus K_0$, say.

Note that $K_0^\perp = (V^\sharp H)^\perp = \{ \eta : (v^*\xi|\eta)=0 \ (v\in V, \xi\in H) \} = \{ \eta : v\eta=0 \ (v\in V) \}$.  Let $\iota \colon K_0 \to K$ be the inclusion, and $p\colon H\to H_0$ be the projection (the adjoint of the inclusion $H_0\to H$).  It is natural to consider $V_0 = \{ pv\iota : v\in V \} \subseteq \mc B(K_0, H_0)$.  As $V K \subseteq H_0$ we see that really $pv\iota = v|_{K_0}$ for $v\in V$.  By the calculation of $K_0^\perp$, every $v\in V$ is zero on $K_0^\perp$, and so restricting $v$ to $K_0$ doesn't lose any information.  We conclude that the natural map $V \to V_0$ is an isometry.

Consider $V_0 V_0^\sharp$ acting on $H_0$, which has elements of the form $p v \iota \iota^* u^* p^*$ for $u,v\in V$.  As $\iota \iota^*$ is the projection of $K$ onto $K_0\subseteq K$, and $u^* \in V^\sharp$ has image contained in $K_0$, this becomes $p vu^* p^*$.  As $p^*$ is the inclusion $H_0\to H$, and $vu^*$ has image contained in $H_0$, we see that $p vu^* p^* = vu^* |_{H_0}$.  So $C_0 = \overline{ V_0 V_0^\sharp }$ is just $C$ restricted to $H_0$.  Similarly $D_0 = \overline{ V_0^\sharp V_0 }$ is the restriction of $D$ to $K_0$.  We have argued that $V_0$ acting on $K$ is the restriction of $V$ to $K_0$; similarly for $V_0^\sharp$.  In conclusion, the restriction of the linking algebra to $H_0 \oplus K_0$ is exactly the linking algebra for $V_0$.

Notice that $V_0$ is \emph{non-degenerate} in the sense that $V_0K_0$ is dense in $H_0$, and $V_0^\sharp H_0$ is dense in $K_0$, and similarly $C_0$ and $D_0$ are non-degenerate.

Finally, suppose that $V \subseteq \mc B(K,H)$ is a $W^*$-TRO.  As $V_0 \subseteq \mc B(K_0, H_0)$ is just the (co)restriction of $V$, it is easy to see that $V_0$ is also a $W^*$-TRO.

\section{Double duals}\label{sec:biduals}

We wish to show that when $V\subseteq\mc B(H,K)$ is a TRO, also $V^{**}$ is a ($W^*$-) TRO, see Section~\ref{sec:dualTROs}.  Form the linking algebra $L(V)$, and let $L(V)\subseteq\mc B(L)$ be some faithful, non-degenerate representation: we wish to find a decomposition of $L$ such that $L(V)$ becomes a matrix algebra with respect to this decomposition.

Regard $C$ as a subalgebra of $L(V)$ and let $L_C = \overline{CL}$.  Let $(e_i)$ be an approximate identity for $C$, so $e_i \xi \to \xi$ in norm for each $\xi\in L_C$.  Letting $p$ be the orthogonal projection onto $L_C$ we hence have that $e_i p \to p$ strongly.  As $e_i^*(L) \subseteq L_C$ for each $i$, we have that $(1-p)e_i^*=0$ and so $e_i(1-p)=0$.  So for any $\xi\in L$ we have that $e_i\xi = e_ip\xi + e_i(1-p)\xi = e_ip\xi \to p\xi$, and so conclude that $e_i\to p$ strongly.  Similarly set $L_D = \overline{DL}$, let $q$ be the projection onto $L_B$, and let $(f_j)$ be an approximate identity for $B$, so that $f_j \to q$ strongly.

For $u\in V$ we have that $ue_i\to u$ in norm: this follows by computing $(ue_i-u)^*(ue_i-u)$, compare \cite[(1.5)]{Lance_HilbModsBook} for example.  Similarly $f_ju\to u$.  It follows that
\[ e_i + f_j = \begin{pmatrix} e_i & 0 \\ 0 & f_j \end{pmatrix} \in L(V) \]
forms an approximate identity for $L(V)$, where the underlying directed set is given the product order.  As $L(V)$ acts on $L$ non-degenerately, the same argument shows that $e_i+f_j \to 1$ strongly, and so $p+q=1$.  Given a typical element of $L(V)$ and $\xi\in L$ we see that
\begin{align*} p \begin{pmatrix} a & u^* \\ v & b \end{pmatrix} p \xi
&= \lim_i p \begin{pmatrix} a & u^* \\ v & b \end{pmatrix} \begin{pmatrix} e_i & 0 \\ 0 & 0 \end{pmatrix} \xi
= \lim_i p \begin{pmatrix} ae_i & 0 \\ ve_i & 0 \end{pmatrix}\xi
= p \begin{pmatrix} a & 0 \\ v & 0 \end{pmatrix}\xi 
= \lim_i \begin{pmatrix} e_i a & 0 \\ 0 & 0 \end{pmatrix}\xi
= a \xi,
\end{align*}
that is, $L(V) \ni x \mapsto pxp$ is the natural projection of $L(V)$ onto $C$.  Similarly $D = qL(V)q$ and $V = pL(V)q$.  We conclude that with respect to the direct sum $L = L_C \oplus L_D$ we have that $L(V)$ is a matrix algebra (see e.g.\@ \cite{Skeide_matrix_cstar_algs} for more on ``generalised matrix $C^*$-algebras'').

As the projection $L(V)\to C; x\mapsto pxp$ is implemented spatially, it is continuous for the weak$^*$-topology on $\mc B(L)$; similarly the other projections.  It follows that $\overline{L(V)}^\sigma$ is also a matrix algebra, but by von Neumann's theorem, this is $L(V)''$, so
\begin{equation}\label{eq:Mdd}
L(V)'' = \overline{L(V)}^\sigma
= \begin{pmatrix} \overline{C}^\sigma & \overline{V}^{\sigma} \\ \overline{V}^{\sigma \sharp} & \overline{D}^\sigma \end{pmatrix}
= \begin{pmatrix} C'' & \overline{V}^{\sigma} \\ \overline{V}^{\sigma\sharp} & D'' \end{pmatrix},
\end{equation}
where here $C''$ is computed in $\mc B(L_C)$, similarly $D''$, and $\overline{V}^\sigma$ is the weak$^*$ closure of $V$ in $\mc B(L_C, L_D)$.  We have that $(V^\sigma)^\sharp = (V^\sharp)^\sigma$ by weak$^*$-continuity of the adjoint on $\mc B(L)$.

We now choose $L(V) \subseteq \mc B(L)$ to be the universal representation of $L(V)$, e.g.\@ \cite[Definition~III.2.3]{TakesakiI}, so that $L(V)'' = L(V)^{**}$ the bidual, the universal enveloping algebra of $L(V)$.  Given $\mu\in V^*$ we Hahn-Banach extend $\mu$ to $L(V)^*$ with the same norm (or pre-compose with the projection $L(V)\to V$).  Given $\epsilon>0$ there is $\omega\in\mc B(L)_*$ restricting to $\mu$ with $\|\omega\| \leq (1+\epsilon)\|\mu\|$.  Regarding $\omega$ as a trace-class operator on $L$, and $q\omega p$ as trace-class $L_C \to L_D$, we have
\[ \mu(v) = \Tr\bigl( \omega \smattwo{0}{v}{0}{0} \bigr)
= \Tr\bigl( \smattwo{0}{0}{0}{1} \omega \smattwo{1}{0}{0}{0} \smattwo{0}{v}{0}{0} \bigr)
= \Tr(q\omega p v) \qquad (v\in V). \]
This shows that $\mc B(L_D, L_C)_* \to V^*$ is a metric surjection, and so $\overline{V}^\sigma \subseteq \mc B(L_D, L_C)$ is isometric to the bidual $V^{**}$.  We have hence realised $V^{**}$ as a W$^*$-closed TRO.

As an aside, we could now start with $\overline{V}^\sigma = V^{**} \subseteq \mc B(L_D, L_C)$ and consider the linking algebra for this $W^*$-TRO.  This involves considering for example $(\overline{V}^\sigma)^\sharp \overline{V}^\sigma$, which by the form of \eqref{eq:Mdd}, is a subalgebra of $\overline{D}^\sigma = D''$.  As $V^\sharp V$ is norm dense in $D$, $V^\sharp V$ is weak$^*$-dense in $D''$ and so we conclude that $(\overline{V}^\sigma)^\sharp \overline{V}^\sigma = D''$.  Similarly for $C''$.  So the linking algebra of $\overline{V}^\sigma$ is exactly $L(V)''$.

\bibliographystyle{plain}
\bibliography{paper.bib}

\noindent
\texttt{matt.daws@cantab.net} \\
Matthew Daws,  \\
School of Mathematical Sciences \\
Lancaster University \\
Lancaster \\
LA1 4YF \\
United Kingdom

\end{document}